\newcommand{\RR}{\mathds{R}}
\newcommand{\CC}{\mathds{C}}
\newcommand{\LL}{{\mathcal L}}
\newcommand{\fra}{\mathfrak a}
\DeclareMathAlphabet\gothic{U}{euf}{m}{n}
\newtheorem{theorem}{Theorem}
\newtheorem{lemma}[theorem]{Lemma}
\newtheorem{proposition}[theorem]{Proposition}
\newtheorem{remark}[theorem]{Remark}
\numberwithin{theorem}{section}
\numberwithin{equation}{section}
\date{}
\begin{document}
\title{The square root  of a parabolic operator}
\author{El Maati Ouhabaz}

\maketitle     

\begin{abstract}
Let $L(t) = - {\rm div} \left( A(x,t) \nabla_x \right)$ for $t \in (0, \tau)$ be a uniformly elliptic operator with boundary conditions on a domain $\Omega$ of $\RR^d$ and $\partial = \frac{\partial}{\partial t}$. Define the parabolic operator $\LL = \partial + L$ on $L^2(0, \tau, L^2(\Omega))$ by  $(\LL u)(t) := \frac{\partial u(t)}{\partial t} + L(t)u(t)$. We assume a very little of regularity for the boundary of $\Omega$ and we assume that the coefficients $A(x,t)$ are measurable in $x$ and piecewise $C^\alpha$ in $t$ (uniformly in $x \in \Omega$) for some $\alpha > \frac{1}{2}$.  We prove the Kato square root property for  $\sqrt{\LL}$ and the estimate 
$$\| \sqrt{\LL}\, u \|_{L^2(0,\tau, L^2(\Omega))} \approx  \| \nabla_x u \|_{L^2(0,\tau, L^2(\Omega))}  + \| u \|_{H^{\frac{1}{2}}(0,\tau, L^2(\Omega))} + \left( \int_0^\tau \| u(t) \|_{L^2(\Omega)}^2\, \frac{dt}{t} \right)^{1/2}.$$ We also prove $L^p$-versions of this result. 
\end{abstract}

\vspace{5mm}
\noindent

\vspace{5mm}
\noindent
{\bf Keywords}: elliptic and parabolic operators, the Kato square root property, maximal regularity, the holomorphic functional calculus, non-autonomous evolution equations.

\vspace{1cm}

\noindent
{\bf Home institution:}    \\[1mm]
Institut de Math\'ematiques de Bordeaux \\ 
Universit\'e de Bordeaux, CNRS, UMR 5251,  \\ 
351, Cours de la Lib\'eration.
33405 Talence, France.\\
Elmaati.Ouhabaz@math.u-bordeaux.fr\\
ORCID: 0000-0003-0849-3957. \\[8mm]


\section{Introduction and the main results}\label{sec0}

Consider on $L^2(\RR^d)$ the differential  operator $ L(t) = - {\rm div} \left( A(x,t) \nabla_x \right)$ where the matrix  $A(x,t) = (a_{kl}(x,t))_{1 \le k,l \le d}$ has complex measurable entries and satisfies the usual ellipticity condition 
\begin{equation}\label{eq0}
{\rm Re} \langle A(x,t) \xi, \xi \rangle \ge \kappa |\xi|^2,   \quad  | \langle A(x,t) \xi, \zeta \rangle | \le C |\xi| |\zeta |
\end{equation}
for all $\xi, \zeta \in \CC^d$, where $\kappa, C$ are positive constants independent of $(x,t) \in \RR^d\times \RR$ and $\langle ., . \rangle$ denotes the scalar product of $\CC^d$.
We consider the first order differential operator $\partial u = \frac{\partial u(t)}{\partial t}$ for all $u$ in the Sobolev space $H^1(\RR, L^2(\RR^d))$. One defines the half-order derivative $\partial^{1/2}$ by
\[ \partial^{1/2} u (t)  = - \frac{1}{2 \sqrt{2\pi}} \int_\RR \frac{1}{ | t-s|^{3/2}} ( u(t) -u(s) )\, ds.
\]
The following theorem is a parabolic version of the Kato square root property. It is proved by P. Auscher, M. Egert and K. Nystr\"om \cite{AEN}.
\begin{theorem}\label{thm-AEN}
Suppose \eqref{eq0}. There exists a realization of the  parabolic operator $\LL := \partial + L$ which is maximal accretive on $L^2(\RR^{d+1})$, the domain of its square root $\sqrt{\LL}$ coincides with 
$H^{\frac{1}{2}}(\RR, L^2(\RR^d)) \cap L^2(\RR, H^1(\RR^d))$ and 
\[ \| \sqrt{\LL} u \|_{L^2(\RR^{d+1})} \approx \| \nabla_x u \|_{L^2(\RR^{d+1})} + \| \partial^{1/2} u \|_{L^2(\RR^{d+1})}
\]
for all $u \in D(\sqrt{\LL})$.
\end{theorem}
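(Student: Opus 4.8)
The plan is to realise $\LL$ as (a block of the square of) a perturbed first-order Dirac-type operator and to read off both the domain identification and the square-root estimate from the boundedness of that operator's holomorphic functional calculus, in the spirit of Axelsson--Keith--McIntosh but carried out in parabolic geometry. Write $H_t := \partial^{1/2}$ for the nonnegative self-adjoint Fourier multiplier in time with symbol $|\xi|^{1/2}$, and $\mathcal H_t$ for the Hilbert transform in $t$; these commute and, after normalising $\mathcal H_t$, satisfy $H_t\,\mathcal H_t\,H_t = \partial$. With $\underline\nabla := (H_t,\nabla_x)$ acting from $L^2(\RR^{d+1})$ to $L^2(\RR^{d+1};\CC^{1+d})$ one gets the factorisation $\LL = \underline\nabla^{\,*}\,B\,\underline\nabla$, where $B := \mathrm{diag}(\mathcal H_t,A)$ is a bounded ``multiplication'' operator. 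Assemble the nilpotent $\Gamma$ on $L^2(\RR^{d+1})\oplus L^2(\RR^{d+1};\CC^{1+d})$ sending $(u_0,\mathbf u)\mapsto(0,\underline\nabla u_0)$ and the perturbed Dirac operator $\Pi_B$ associated with $\Gamma$, $\Gamma^*$ and $B$. Once $\Pi_B$ is shown bisectorial with a bounded $H^\infty$-calculus on a bisector, the abstract machinery identifies $\sqrt{\LL}$ with a restriction of $|\Pi_B|$, whence $\|\sqrt{\LL}\,u\|_2 \approx \|\Gamma u\|_2 = \|\underline\nabla u\|_2 \approx \|\nabla_x u\|_2 + \|\partial^{1/2}u\|_2$ and $D(\sqrt{\LL}) = D(\underline\nabla) = H^{1/2}(\RR,L^2(\RR^d))\cap L^2(\RR,H^1(\RR^d))$. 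The m-accretive realisation of $\LL$ is the one attached, through Lions's non-autonomous form method, to the sesquilinear form $(u,v)\mapsto\langle B\underline\nabla u,\underline\nabla v\rangle$ --- a \emph{degenerate} form, since its real part controls $\|\nabla_x u\|_2$ only and not the time component --- and on the full line no boundary term of the type produced by a finite time interval appears.

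The crux is the square function estimate $\int_0^\infty\|t\Pi_B(I+t^2\Pi_B^2)^{-1}u\|_2^2\,\frac{dt}{t}\lesssim\|u\|_2^2$, together with its dual, which I would establish by the classical three-step reduction, carried out throughout with respect to the parabolic dilations $(t,x)\mapsto(\lambda^2 t,\lambda x)$, parabolic cubes, and the parabolic metric $\rho((t,x),(s,y)) = |t-s|^{1/2}+|x-y|$ --- the correct scaling, since $H_t$ and $\nabla_x$ are homogeneous of the same degree. Step one: exploiting the nilpotence of $\Gamma$ and Hodge-type splittings, reduce to controlling $\Theta_t := t\Pi_B(I+t^2\Pi_B^2)^{-1}$ on the tangential subspace $\overline{\mathcal R(\Gamma^*)}$. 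Step two: prove $L^2$ off-diagonal (Gaffney) bounds for $\Theta_t$ and the resolvents $(I+t^2\Pi_B^2)^{-1}$ in the metric $\rho$; because $H_t$ is nonlocal only polynomial decay is available, and this must be tracked carefully in all summations. Step three: a $T(b)$/stopping-time argument on parabolic dyadic cubes, comparing $\Theta_t$ to a principal part $\gamma_t$ (a multiplication operator) modulo a Carleson-measure error, with test functions on a cube $Q$ built from solutions of an auxiliary perturbed parabolic equation localised to $Q$ and controlled by parabolic Caccioppoli and De Giorgi--Nash--Moser estimates; the assumption that $A$ is piecewise $C^\alpha$ in $t$ with $\alpha>\tfrac12$ enters exactly here, making the coefficients, at the parabolic scale, close to coefficients that are constant in $t$ on each slab.

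The genuine obstacle --- and the point where the problem departs essentially from the elliptic Kato problem --- is that $B$ is \emph{not} strictly accretive: its $\mathcal H_t$-block is skew-adjoint, so $\mathrm{Re}\langle Bv,v\rangle$ sees only the $\nabla_x$-component, and $\Pi_B$ then fails the naive resolvent bounds underlying bisectoriality and the Axelsson--Keith--McIntosh framework. The intended way around this is to use that the bad block is a \emph{constant-coefficient} Fourier multiplier in $t$ commuting with $H_t$: split off the ``flat'' operator built from $B_0 := \mathrm{diag}(\mathcal H_t,I)$ --- whose functional calculus can be settled directly by Fourier analysis in $t$, using the bounded $H^\infty$-calculus of $\partial$ on a bisector and the maximal regularity of the abstract parabolic operator --- and treat the genuinely elliptic perturbation coming from $A$ on top of it. A secondary difficulty is that the domain identification needs an explicit Hodge-type decomposition of $L^2(\RR^{d+1};\CC^{1+d})$ and of $\mathcal N(\Pi_B)$ adapted to the nonlocal $H_t$, plus the density statements that return one from closures of ranges to $H^{1/2}(\RR,L^2(\RR^d))\cap L^2(\RR,H^1(\RR^d))$. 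I expect Step three --- the parabolic $T(b)$ argument and the parabolic PDE estimates it rests on --- together with the non-coercivity issue to absorb essentially all of the work.
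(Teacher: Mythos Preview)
The paper does not prove this theorem; it is quoted from Auscher--Egert--Nystr\"om \cite{AEN}, and the paper explicitly remarks that the proof there proceeds via the first-order (Dirac) approach of Axelsson--Keith--McIntosh. That is precisely the strategy you sketch: factorise $\LL$ through a parabolic gradient $(\partial^{1/2},\nabla_x)$, assemble a perturbed Dirac operator $\Pi_B$, and obtain the square-root estimate from a bounded $H^\infty$-calculus established by square-function and $T(b)$ arguments in parabolic geometry. You also correctly identify the main obstruction --- the $\mathcal H_t$-block of $B$ is not accretive, so the standard AKM hypotheses fail and one must exploit that this block is a constant-coefficient Fourier multiplier. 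So the overall architecture of your proposal matches the approach the paper attributes to \cite{AEN}, not the paper's own methods (which, for its own Theorems~\ref{thm1} and~\ref{thm1-p}, go through maximal regularity and imaginary powers and are entirely different).

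There is, however, a genuine error. You write that ``the assumption that $A$ is piecewise $C^\alpha$ in $t$ with $\alpha>\tfrac12$ enters exactly here'' in the $T(b)$ step. But Theorem~\ref{thm-AEN} carries \emph{no} regularity hypothesis on $t\mapsto A(\cdot,t)$: only \eqref{eq0} is assumed, and the coefficients are merely bounded measurable in both variables. The piecewise $C^\alpha$ condition belongs to Theorem~\ref{thm1}, the paper's result on a domain $\Omega$ and a finite interval $(0,\tau)$, and it is needed there solely because that proof relies on the non-autonomous maximal regularity of Theorem~\ref{thm-HO}. In the \cite{AEN} argument on $\RR^{d+1}$ the $T(b)$/Carleson step goes through for measurable coefficients --- indeed, that is one of the main points of \cite{AEN}: the parabolic Kato problem is solved at the same level of generality as the elliptic one. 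Your proposed mechanism (``at the parabolic scale, close to coefficients constant in $t$ on each slab'') is therefore not the actual one, and building it into the proof would prove a strictly weaker statement than the one you are asked to establish.
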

A similar  result was proved by K. Nystr\"om \cite{Ny} in the case where $A(x,t) = A(x)$. 

The aim of the present short paper  is twofold. We consider  the above parabolic Kato  square root problem for operators on domains with boundary conditions.  Secondly,  we   investigate the problem on $L^p(0, \tau, L^r(\Omega))$ and not only on $L^2(0, \tau, L^2(\Omega))$. We  consider the time variable $t$ in an interval $(0, \tau)$ which is usual for evolution equations rather than the whole set $\RR$. In order to give the precise statements of our results we need some preparation. 

Let $\Omega$ be an open subset of $\RR^d$ with boundary $\Gamma$. Consider a closed subspace $V$ of $H^1(\Omega)$ which contains $H^1_0(\Omega)$ and define the sesquilinear form
\[
\fra(t,u,v) = \int_\Omega A(x,t) \nabla_x u. \overline{\nabla_x v}\, dx
\]
with domain $V$. We assume that the matrix $A(x,t) = (a_{kl}(x,t))_{1 \le k,l\le d}$ satisfies the ellipticity condition \eqref{eq0} with constants independent of $(x,t) \in \Omega \times (0, \tau)$. The associated operator is formally given by $ L(t) = - {\rm div} \left( A(x,t) \nabla_x \right)$ and subject to the boundary conditions fixed  by $V$. 
We say that $\fra$ is $C^\alpha$ for some $\alpha > 0$ if there exists a positive constant $M$ such that for all $u, v \in V$
\[ | \fra(t,u,v) - \fra(s,u,v) | \le M |t-s|^\alpha \| u \|_{V} \| v \|_{V}.
\]
We say that $\fra$ is piecewise $C^\alpha$ for some $\alpha > 0$ if there exist $\tau_1 = 0  < \tau_2 <  \cdots < \tau_N = \tau$ such that on each sub-interval $(\tau_j, \tau_{j+1})$, $\fra$ is the restriction of a $C^\alpha$ form on $[\tau_j, \tau_{j+1}]$. 

We make the following two assumptions. Suppose that $\fra$ is piecewise $C^\alpha$ for some $\alpha > \frac{1}{2}$. Observe  that this  is satisfied if the coefficients $a_{kl}, 1\le k,l \le d$,   are piecewise $C^\alpha$ in the $t$-variable, uniformly in the $x$-variable, for some $\alpha > \frac{1}{2}$. \\
Next, we assume that for each fixed $t$, the operator $L(t)$ satisfies the following Kato square root property
\begin{equation}\label{eq2}
V \subseteq D\left( \sqrt{L(t)} \right)  \quad {\rm and } \quad  \left\| \sqrt{L(t)}\,  f \right\|_{L^2(\Omega)}  \le C \left[ \left\| \nabla_x f \right \|_{L^2(\Omega)} + \left\| f \right \|_{L^2(\Omega)}  \right], \ f \in V.
\end{equation}
The constant $C$ is independent of $t$. By a well known  duality argument, \eqref{eq2}  implies that $D\left( \sqrt{L(t)} \right) = V $ and the norms
$\left\| \sqrt{L(t)}\,  f \right\|_{L^2(\Omega)} + \left\|  f \right\|_{L^2(\Omega)}$ and $\left\| \nabla_x f \right \|_{L^2(\Omega)} + \left\| f \right \|_{L^2(\Omega)}$ are equivalent. 
In many cases, the homogeneous  estimate 
\begin{equation}\label{eq2-0}
D\left( \sqrt{L(t)} \right) = V \quad {\rm and } \ \  \left\| \sqrt{L(t)}\,  f \right\|_{L^2(\Omega)} \approx \left\| \nabla_x f \right \|_{L^2(\Omega)}
\end{equation}
holds. The implicit constants in the equivalence $\approx$ are independent of $t$ since they depend only on the ellipticity constants. \\
The square root property  \eqref{eq2-0} is  always satisfied if $A(x,t)$ is symmetric. It is  satisfied if $\Omega = \RR^d$ by the solution of the Kato square root problem (see P. Auscher et al. \cite{AHLMT}). The non-homogeneous estimate \eqref{eq2} is satisfied if one has in addition  terms of lower order. On domains, \eqref{eq2} is satisfied if the boundary of $\Omega$ has a little of regularity (for example Lipschitz is enough) for Dirichlet boundary conditions ($V = H_0^1(\Omega)$),  Neumann  boundary conditions ($V = H^1(\Omega)$) or even for mixed boundary conditions. For this we refer to M. Egert, R. Haller-Dintelmann and P. Tolksdorf  \cite{Eg2}, the recent paper  
of S. Bechtel, M. Egert and R. Haller-Dintelmann \cite{BEHD} and the references therein.

\medskip
 Now we state our first main result. 

\begin{theorem}\label{thm1}
Suppose that the ellipticity condition \eqref{eq0} holds on $\Omega$. Suppose also  \eqref{eq2} and  that $\fra$ is  piecewise $C^\alpha$  for some $\alpha > \frac{1}{2}$.  Then there exists a realization of the parabolic operator $\LL := \partial + L$ that  is maximal accretive on $L^2(0, \tau, L^2(\Omega))$ and satisfies the Kato square root property
\[ D\left(\sqrt{\LL} \right) = \left\{ u \in H^{\frac{1}{2}}(0, \tau, L^2(\Omega)) \cap L^2(0, \tau, V), \ \int_0^\tau \| u(t) \|_{L^2(\Omega)}^2  \, \frac{dt}{t}< \infty \right\}
\]
and
\[
\left\| \sqrt{\LL}\,  u \right\|_{L^2(0,\tau, L^2(\Omega))} \approx  \left\| \nabla_x u \right\|_{L^2(0,\tau, L^2(\Omega))} + \| u \|_{H^{\frac{1}{2}}(0,\tau, L^2(\Omega))} + \left( \int_0^\tau \| u(t) \|_{L^2(\Omega)}^2\, \frac{dt}{t} \right)^{1/2}
\]
for all $u \in D\left(\sqrt{\LL} \right)$. 
\end{theorem}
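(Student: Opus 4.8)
The plan is to reduce Theorem~\ref{thm1} to the known result Theorem~\ref{thm-AEN} on the whole space, by a combination of a ``freezing'' argument in the time variable and a localization/extension procedure at the endpoints $t=0$ and $t=\tau$. First I would observe that the assumption \eqref{eq2} already tells us that for each fixed $t$ the operator $L(t)$ (plus a shift, if necessary) behaves like $-\Delta$ on $\Omega$ at the level of square-root domains, with uniform constants; combined with the piecewise $C^\alpha$, $\alpha>\tfrac12$, hypothesis this is precisely the quantitative input needed to run a perturbation scheme in $t$. The natural object to study is the maximal accretive realization of $\LL=\partial+L$ associated, via the Lions/form method, with the non-autonomous form $t\mapsto \fra(t,\cdot,\cdot)$ on $V$; one must first check that this realization is well defined and maximal accretive on $L^2(0,\tau,L^2(\Omega))$, which follows from accretivity of $\partial$ on $(0,\tau)$ (with the appropriate domain capturing the initial condition $u(0)=0$) together with the sectoriality of $\fra$.

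The core of the argument is the square-root estimate. I would introduce the ``parabolic gradient'' norm
\[
N(u):=\|\nabla_x u\|_{L^2(0,\tau,L^2(\Omega))}+\|u\|_{H^{1/2}(0,\tau,L^2(\Omega))}+\Bigl(\int_0^\tau \|u(t)\|_{L^2(\Omega)}^2\,\tfrac{dt}{t}\Bigr)^{1/2},
\]
and show $\|\sqrt{\LL}\,u\|\approx N(u)$ in two steps. \textbf{Step 1 (constant-in-$t$ coefficients, $t\in\RR$):} this is exactly Theorem~\ref{thm-AEN} when $\Omega=\RR^d$; for general $\Omega$ with the boundary conditions fixed by $V$, one repeats the Auscher--Egert--Nystr\"om scheme using \eqref{eq2-0}/\eqref{eq2} as a black box in place of the Euclidean Kato theorem --- the first-order/functional-calculus machinery there is insensitive to the geometry of $\Omega$ as long as the elliptic part satisfies the Kato estimate with uniform constants and $V$ is a fixed form domain. \textbf{Step 2 (from $\RR$ to $(0,\tau)$ and from constant to piecewise $C^\alpha$):} fix a partition point $\tau_j$, freeze the coefficients at $t=\tau_j$ on the slab $(\tau_j,\tau_{j+1})$, and write $\fra(t,\cdot,\cdot)=\fra(\tau_j,\cdot,\cdot)+\bigl(\fra(t,\cdot,\cdot)-\fra(\tau_j,\cdot,\cdot)\bigr)$; the remainder is bounded on $V\times V$ by $M|t-\tau_j|^\alpha$, so the associated operator-valued perturbation, after dividing by $\sqrt{\LL_0}$ on each side (with $\LL_0$ the frozen parabolic operator), is controlled by a fractional integral operator with kernel $|t-s|^{\alpha-1}$ acting on $L^2$ in time --- this is where $\alpha>\tfrac12$ is used, since one needs the perturbation to be \emph{lower order} relative to the half-derivative $\partial^{1/2}$, i.e. to gain more than $\tfrac12$ a derivative in time. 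A Schur-type estimate then makes this perturbation small after further localizing the slab, and one patches the finitely many slabs together. The appearance of the extra weighted term $\int_0^\tau\|u(t)\|^2\,dt/t$ is exactly the price of working on a half-line/interval rather than on $\RR$: it encodes the boundary behaviour at $t=0$ (the initial condition), and I would produce it by comparing the homogeneous $\dot H^{1/2}(0,\tau)$ seminorm with the full $H^{1/2}$ norm via a Hardy inequality, the defect being precisely the $dt/t$ weight.

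The $L^p$--$L^r$ versions I would obtain from the $L^2$ result by the holomorphic functional calculus: the parabolic operator $\LL$, being maximal accretive, has a bounded $H^\infty$ calculus on $L^2$; one upgrades this to $L^p(0,\tau,L^r(\Omega))$ using that the elliptic operators $L(t)$ have Gaussian or at least $L^r$-bounded heat kernels (inherited from the ellipticity and the mild regularity of $\Gamma$), so that $\LL$ generates an analytic semigroup with $L^p$-$L^r$ smoothing, hence has maximal $L^p$-regularity and a bounded functional calculus on the mixed-norm space; the square-root characterization in mixed norm then follows by the standard square-function/$H^\infty$-calculus argument.

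\textbf{Main obstacle.} I expect the genuine difficulty to be Step 2: controlling the non-autonomous perturbation in the \emph{homogeneous} parabolic norm. On $\RR$ the kernel estimate $|t-s|^{\alpha-1}$ with $\alpha>\tfrac12$ gives boundedness of the relevant commutator/perturbation between the homogeneous spaces $\dot H^{1/2}(\RR,L^2)$ and $L^2(\RR,L^2)$ by a clean scaling/Schur argument; on the bounded interval $(0,\tau)$ the homogeneous norms are no longer scale invariant, the endpoints $0$ and $\tau$ interact with the fractional-integration kernel, and the partition into slabs $(\tau_j,\tau_{j+1})$ is not respected by $\partial^{1/2}$ (which is nonlocal in $t$). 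Reconciling the piecewise structure of $\fra$ with the nonlocality of the half-derivative --- i.e. showing the cross-slab contributions are harmless and that the patched norm is exactly the one in the statement, extra $dt/t$ term included --- is the technical heart of the proof, and is presumably where the bulk of the paper's work lies.
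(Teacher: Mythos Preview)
Your approach diverges sharply from the paper's, and Step~1 contains a genuine gap. You propose to adapt the Auscher--Egert--Nystr\"om first-order scheme to parabolic operators on domains, claiming it is ``insensitive to the geometry of $\Omega$'' once the elliptic Kato estimate \eqref{eq2} is available. The paper itself states in the introduction that adapting the proof of \cite{AEN} to domains ``is not known and seems to be a difficult problem''; the first-order/perturbed-Dirac machinery relies on the ambient structure of $\RR^{d+1}$ in ways that do not obviously survive the passage to $\Omega\times(0,\tau)$ with boundary conditions. So your Step~1 is not a black box but an open problem, and the rest of the plan collapses without it.

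The paper's actual route is much shorter and avoids all of the difficulties you anticipate. It does \emph{not} reduce to Theorem~\ref{thm-AEN}, does not freeze coefficients, and does not handle the nonlocality of $\partial^{1/2}$ across slabs. Instead: (i) the piecewise $C^\alpha$ hypothesis with $\alpha>\tfrac12$ is used solely to invoke an existing maximal $L^2$-regularity theorem for the non-autonomous Cauchy problem (Haak--Ouhabaz), which immediately gives that $\LL$ with domain ${}_0H^1\cap D(L)$ is invertible and maximal accretive, and that $\partial\LL^{-1}$ and $L\LL^{-1}$ are bounded; (ii) since $\partial$, $L$, and $\LL$ are all maximal accretive on a Hilbert space, they automatically have bounded imaginary powers, and Stein interpolation of $z\mapsto\partial^z\LL^{-z}$ (and $L^z\LL^{-z}$) between $z=0$ and $z=1$ yields $D(\sqrt{\LL})=D(\sqrt{\partial})\cap D(\sqrt{L})$ with the two-sided estimate, the reverse inequality coming by duality; (iii) the identification of $D(\sqrt{\partial})$ with the $H^{1/2}$-plus-$dt/t$ space is a known interpolation result from Lions--Magenes, and $D(\sqrt{L})=L^2(0,\tau,V)$ follows directly from \eqref{eq2}. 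Your prediction that the ``technical heart'' is patching slabs across the nonlocal $\partial^{1/2}$ is therefore off the mark: that work is entirely absorbed into the cited maximal-regularity theorem, and the square-root argument itself is a few lines of functional calculus.
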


Here and throughout this paper, $H^{\frac{1}{2}}(0, \tau, L^2(\Omega)) $ is  the usual  fractional Sobolev space of order $\frac{1}{2}$. It is defined as the complex  interpolation space $\left[ H^1(0, \tau, L^2(\Omega)), L^2(0, \tau, L^2(\Omega)) \right]_{\frac{1}{2}}$. Every function in 
$ H^{\frac{1}{2}}(0, \tau, L^2(\Omega)) $ is the restriction to $(0, \tau)$ of a function in $ H^{\frac{1}{2}}(\RR, L^2(\Omega))$.  The later space is  defined as usual by using  the Fourier transform. For all this we refer to J.L. Lions and E. Magenes \cite{LM}, Chapter 3, Section 5. 

\medskip

Our main idea for the proof of  the above  result is to make use of   the maximal regularity of the non-autonomous evolution equation 
\begin{equation}\label{eq:evol-eq} \tag{P}
\left\{
  \begin{array}{rcl}
     \frac{ \partial u(t)}{\partial t}  + L(t)\,u(t) &=& f(t), \ t \in (0, \tau] \\
     u(0)&=&u_0.
  \end{array}
\right.
\end{equation}
This maximal regularity was proved in an abstract setting by B. Haak and E.M. Ouhabaz \cite{HO} under the assumption that  the form $\fra$ is piecewise $C^\alpha$ for some $\alpha > \frac{1}{2}$.  It is also proved there that the maximal $L^p$-regularity holds if there exists a  non-decreasing function  $\omega: [0, \tau] \to [0, \infty)$  such that for $u, v  \in V$
\[  \left| \fra(t,u,v) - \fra(s,u,v) \right| \le \omega(|t{-}s|) \, \| u \|_{V} \| v \|_{V}
\]
with 
\begin{equation}  \label{eq1}
   \int_0^\tau \frac{\omega(t)}{t^{\frac{3}{2}}}\, dt < \infty. 
\end{equation}
 See also M. Achache and E.M. Ouhabaz \cite{AO} and the references there for an account on recent development on this topic.
 
 \medskip
  The  idea of using the maximal regularity in the proof of  Theorem \ref{thm1} lies in the fact  that we have a relatively precise description of the domain of the maximal accretive operator $\LL$.  Then, with the help of  imaginary powers (or a holomorphic functional calculus) of $\LL$ we can appeal to results on interpolation spaces which in turn give the description of $D\left(\sqrt{\LL} \right)$. Note that the proof of Theorem \ref{thm-AEN} in \cite{AEN} is very different and it is based on the first order approach initiated  by A. McIntosh and his collaborators (see e.g., A. Axelsson, S. Keith and A. McIntosh \cite{AKM}). One may wonder whether  the (piecewise) regularity in $t$ which we require in Theorem \ref{thm1} can be removed.  The first strategy to do this is to try to  adapt the proof in \cite{AEN} to parabolic operators on domains. This is not known and  seems to be a difficult problem. The second strategy is to prove the maximal regularity for \eqref{eq:evol-eq}  when the coefficients are merely bounded measurable in $t$ (and $x$).  This is a challenging open problem which was mentioned by J.L. Lions in 1961 and remains open. Note that an example of a family of forms $b(t,\cdot,\cdot)$  such that $t \mapsto b(t,u,v)$ is $C^{\frac{1}{2}}$ in $(0, \tau)$ but the corresponding family of operators does not have the maximal regularity is given by Fackler  \cite{Fackler}. Note however that these are not differential  operators. \\

  Our approach is quite flexible and applies without any additional effort to other situations such as operators with lower order terms, degenerate operators, systems and operators on weighted spaces. For clarity of exposition we do not search for generality and  we keep the setting described above.  Instead, we consider another problem which was not studied  before in the literature. We study the problem of the square root of $\LL$ on  $L^p(0, \tau, L^2(\Omega))$ for $p \not=2$.  For this we shall need the following  slightly stronger condition than \eqref{eq1}
\begin{equation}  \label{om-p}
    \int_0^\tau \frac{\omega(t)}{t^{1+\beta}}\, dt < \infty
\end{equation}
for some $\beta > \frac{1}{2}$. Clearly, \eqref{om-p} is satisfied if the coefficients $a_{kl}$ are $C^\alpha$ in $t$ (uniformly in $x$)  for some 
$\alpha > \frac{1}{2}$ .\\
We prove the following result. 
\begin{theorem}\label{thm1-p}
Suppose the ellipticity condition \eqref{eq0} on $\Omega$. Suppose  \eqref{eq2} and \eqref{om-p} and let $p \in (1, \infty)$ with $p \not=2$.  There exists a realization of the parabolic operator $\LL := \partial + L$ that  is maximal accretive on $L^p(0, \tau, L^2(\Omega))$ and satisfies the Kato square root property
\[
 \| \sqrt{\LL}\, u \|_{L^p(0,\tau, L^2(\Omega))} \approx \| u \|_{W^{\frac{1}{2},p}(0,\tau,L^2(\Omega))} + \| \nabla_x u \|_{L^p(0,\tau, L^2(\Omega))}.
\]
If $p \in (1, 2)$, the domain of $\sqrt{\LL}$ coincides with $W^{\frac{1}{2},p}(0, \tau, L^2(\Omega)) \cap L^p(0, \tau, V)$. If $p \in (2, \infty)$ this domain coincides with $\{u \in W^{\frac{1}{2},p}(0, \tau, L^2(\Omega)) \cap L^p(0, \tau, V), \ u(0) = 0 \}$.
\end{theorem}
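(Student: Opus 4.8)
The plan is to follow the same strategy that proves Theorem~\ref{thm1}, replacing the Hilbert-space interpolation argument by its $L^p$ counterpart. First I would invoke the maximal $L^p$-regularity result of Haak--Ouhabaz \cite{HO} for \eqref{eq:evol-eq}, which holds under \eqref{om-p} (indeed \eqref{om-p} with $\beta>\tfrac12$ is stronger than \eqref{eq1} and is exactly the condition needed for the $L^p$ theory). This identifies the domain of the realization of $\LL=\partial+L$ on $L^p(0,\tau,L^2(\Omega))$ as the maximal regularity space
\[
\mathrm{MR}_p = \bigl\{ u \in W^{1,p}(0,\tau,L^2(\Omega)) \cap L^p(0,\tau,V) : L(\cdot)u(\cdot) \in L^p(0,\tau,L^2(\Omega)),\ u(0)=0 \bigr\},
\]
and shows that $\LL$ is sectorial on $L^p$; combined with \eqref{eq2} and the piecewise $C^\alpha$ regularity one upgrades this to the statement that $\LL$ admits bounded imaginary powers (equivalently a bounded $H^\infty$-calculus) on $L^p(0,\tau,L^2(\Omega))$, using the operator-valued functional calculus machinery and the fact that $L(t)$ has bounded imaginary powers uniformly in $t$. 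This is the same mechanism as in the $p=2$ case but now carried out in the UMD space $L^p(0,\tau,L^2(\Omega))$.

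Next I would identify $D(\sqrt{\LL}) = D(\LL^{1/2})$ by complex interpolation: since $\LL$ has bounded imaginary powers, $D(\LL^{1/2}) = [L^p(0,\tau,L^2(\Omega)), D(\LL)]_{1/2}$ with equivalence of norms. So the task reduces to computing the complex interpolation space of order $\tfrac12$ between $L^p(0,\tau,L^2(\Omega))$ and $\mathrm{MR}_p$. The time-derivative part and the elliptic part decouple at the level of interpolation: interpolating $W^{1,p}(0,\tau,L^2(\Omega))$ with $L^p(0,\tau,L^2(\Omega))$ gives $W^{1/2,p}(0,\tau,L^2(\Omega))$, and interpolating $L^p(0,\tau,V)$ with $L^p(0,\tau,L^2(\Omega))$ together with \eqref{eq2-0}-type estimates (i.e. $D(\sqrt{L(t)})=V$ uniformly) gives the $L^p(0,\tau,V)$ contribution, which one rewrites as $\|\nabla_x u\|_{L^p(0,\tau,L^2(\Omega))}$. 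This yields the claimed norm equivalence $\|\sqrt{\LL}u\|_{L^p} \approx \|u\|_{W^{1/2,p}(0,\tau,L^2(\Omega))} + \|\nabla_x u\|_{L^p(0,\tau,L^2(\Omega))}$.

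The description of the domain as a set then hinges on whether the trace constraint $u(0)=0$ survives interpolation, and this is exactly where the case distinction $p<2$ versus $p>2$ enters: the trace map $u\mapsto u(0)$ from $W^{1,p}(0,\tau,L^2(\Omega))$ (intersected with $L^p(0,\tau,V)$) is well defined into the real interpolation space $(L^2(\Omega),V)_{1-1/p,p}$ or a suitable trace space, and a function in $W^{1/2,p}(0,\tau,L^2(\Omega))$ has a well-defined trace at $t=0$ precisely when $\tfrac12 - \tfrac1p > 0$, i.e. when $p>2$; for $p<2$ there is no trace constraint and $D(\sqrt{\LL}) = W^{1/2,p}(0,\tau,L^2(\Omega)) \cap L^p(0,\tau,V)$, whereas for $p>2$ the constraint $u(0)=0$ is inherited and $D(\sqrt{\LL}) = \{u \in W^{1/2,p}(0,\tau,L^2(\Omega)) \cap L^p(0,\tau,V): u(0)=0\}$. (For $p=2$ one is at the borderline $\tfrac12-\tfrac12=0$, which is why the extra Hardy-type term $\int_0^\tau \|u(t)\|^2\,dt/t$ appears in Theorem~\ref{thm1} instead of a clean trace condition — so the $p\neq 2$ statement is in fact cleaner.)

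I expect the main obstacle to be the precise interpolation computation in the presence of the trace condition: one must show $[L^p(0,\tau,L^2(\Omega)), \mathrm{MR}_p]_{1/2}$ equals the stated space \emph{with} the correct treatment of the boundary condition at $t=0$, which requires a careful use of retraction/coretraction arguments (extending functions past $t=0$ by zero and controlling the extension in $W^{1/2,p}$), and the observation that zero-extension is bounded on $W^{s,p}(0,\tau,L^2(\Omega))$ exactly for $s<1/p+1$ with the borderline $s=1/p$ being the delicate case. Once the functional calculus / bounded imaginary powers of $\LL$ on $L^p$ are in hand (which follows from \cite{HO} plus \eqref{eq2} and the piecewise regularity as in the $p=2$ proof), the rest is a matter of assembling known interpolation identities, with the trace analysis being the only genuinely $p$-dependent point.
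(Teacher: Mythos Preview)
There is a genuine gap in your argument, and it is precisely the step you gloss over. You write that bounded imaginary powers for $\LL$ on $L^p(0,\tau,L^2(\Omega))$ follow ``by the same mechanism as in the $p=2$ case but now carried out in the UMD space''. This is not so. In the $p=2$ proof, BIP for $\LL$ is obtained for free from Kato's classical result that every maximal accretive operator on a \emph{Hilbert} space has bounded imaginary powers (see \eqref{eq4}). That argument has no $L^p$ analogue: maximal accretivity on $L^p$ does not imply BIP, and the fact that each $L(t)$ has BIP on $L^2(\Omega)$ plus BIP for $\partial$ does not automatically give BIP for the sum $\partial+L$ when the two operators do not commute. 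The paper addresses this explicitly: it invokes the Pr\"uss--Simonett perturbation theorem for the $H^\infty$-calculus of a sum of non-commuting operators (Theorem~\ref{thmps}), which in turn requires verifying the Labbas--Terreni commutator estimate \eqref{ps}. That verification (Proposition~\ref{propPS}) is where assumption \eqref{om-p} with $\beta>\tfrac12$ is actually used, not merely to ensure maximal regularity. Your proposal does not identify this mechanism, and without it the BIP step is unjustified.

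There is also a methodological difference after BIP is secured. You propose to compute $[L^p(0,\tau,L^2(\Omega)),\mathrm{MR}_p]_{1/2}$ directly and claim that the time and space parts ``decouple at the level of interpolation''; this decoupling is not obvious for an intersection space with a domain constraint and would itself need a proof. The paper avoids this entirely: it re-runs the Stein-type argument of Lemmas~\ref{lem2} and~\ref{lem3} (bounding $\partial^{1/2}\LL^{-1/2}$ and $L^{1/2}\LL^{-1/2}$ via the three-lines lemma, then the reverse by duality) to obtain $\|\sqrt{\LL}\,u\|\approx\|\sqrt{\partial}\,u\|+\|\sqrt{L}\,u\|$ directly, and only then identifies $D(\sqrt{\partial})$ as the known interpolation space $[{}_0W^{1,p},L^p]_{1/2}$. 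Your trace analysis for the $p\lessgtr2$ dichotomy is correct and matches the paper's use of \cite{Amann,Denk}.
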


 The ideas in the proof are  similar to the case of $p=2$ in the sense that we use the maximal regularity of \eqref{eq:evol-eq}  and estimates for imaginary powers 
$\partial^{is}$, $L^{is}$ and $(\nu + \LL)^{is}$ for some constant $\nu \ge 0$. While for $p=2$, the boundedness of  $\LL^{is}$ follows from the accretivity  of the operator $\LL$ on the Hilbert space $L^2(0, \tau, L^2(\Omega))$, the situation for $p \not=2$ requires some additional work. In order to prove the boundedness of $(\nu + \LL)^{is}$ we use a perturbation result for the holomorphic functional calculus due to J. Pr\"uss and G. Simonett \cite{PS}. The regularity condition \eqref{om-p} will be used both to ensure the maximal $L^p$-regularity and to prove a commutator estimate in order to apply the perturbation theorem in \cite{PS}. As a result, we prove that the maximal accretive operator $\nu + \LL$ has a bounded holomorphic functional calculus on $L^p(0, \tau, L^2(\Omega))$ for all $p \in (1,\infty)$. This latter result uses only the maximal regularity through the condition \eqref{om-p} and not  the square root property \eqref{eq2}.\\
Theorem \ref{thm1-p} shows that the Kato square root property for the parabolic operator $\LL$ holds beyond the Hilbert space setting $L^2(0, \tau, L^2(\Omega))$. A natural question arises whether one might prove a similar result on $L^p(0, \tau, L^r(\Omega))$  for some (or all) $r \not= 2$. We prove such a result for  time independent coefficients. The general case is more complicate and remains open unless the coefficients are smooth with respect to the space variable. See the last section of the paper. \\

Throughout the paper we use $\| . \|_E$ to denote the norm of a given Banach space $E$. All inessential constants are often denoted by $C, C'...$, the notation $ A \approx B$ means that there exists a constant  $C > 0$ such that $\frac{1}{C} A \le B \le C A$. \\

\noindent{\bf Acknowledgements.} The author would like to thank Sebastian Bechtel for several  interesting remarks and comments on an earlier version of this paper and 
 Moritz Egert and Sylvie Monniaux for stimulating discussions.  Thanks are due also to the reviewer for his/her comments on the paper. \\
This research is partly supported by the ANR project RAGE,  ANR-18-CE-0012-01.
\section{Proof of Theorem \ref{thm1}}
We start by recalling the following maximal regularity result from \cite{HO} (Theorem 2 and Corollary 4).  It is proved there in an abstract setting of time dependent forms having the same domain. We state it here for the case of elliptic operators as defined in the introduction, so we assume throughout this section that the ellipticity condition \eqref{eq0} is satisfied on $\Omega$. 
\begin{theorem}\label{thm-HO} 
  1) Suppose that $\fra$ is piecewise $C^\alpha$ for some $\alpha > \frac{1}{2}$ and that \eqref{eq2} holds.  Then the Cauchy problem \eqref{eq:evol-eq}  has
maximal $L^2$--regularity in $L^2(\Omega)$ for any given  $u_0  \in V$. In addition, there exists a positive constant $C$ such that 
\begin{equation}\label{apriori}
\| u \|_2 + \| \frac{\partial u}{\partial t} \|_2 + \| L(\cdot) u(\cdot) \|_2 \le C\left[  \| f \|_2 + \|u_0 \|_{V} \right].
\end{equation}
  2) Suppose \eqref{eq1}. 
   Then  \eqref{eq:evol-eq}, with $u_0 = 0$, has
maximal $L^p$--regularity in $L^2(\Omega)$ for all $p \in (1, \infty)$. If in addition $\omega$
satisfies the $p$--Dini condition
\begin{equation}  \label{eq:p-Dini}
     \int_0^\tau \left(\frac{\omega(t)}{t} \right)^p \,dt <  \infty,
\end{equation}
then  \eqref{eq:evol-eq} has maximal $L^p$--regularity for all $u_0 \in  (L^2(\Omega),
D(L(0)))_{1- \frac{1}{p}, p}$.\\
There exists a positive constant $C$ such that 
 \begin{equation}\label{apriori-p}
\| u \|_p + \| \frac{\partial u}{\partial t} \|_p + \| L(\cdot) u(\cdot) \|_p \le C\left[ \| f \|_p + \|u_0 \|_{(L^2(\Omega), D(L(0)))_{1-\frac{1}{p}, p}} \right].
\end{equation}
\end{theorem}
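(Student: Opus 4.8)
The plan is to prove the a priori estimates \eqref{apriori} and \eqref{apriori-p} by comparing the non-autonomous solution operator with the autonomous (frozen-coefficient) ones and controlling the difference as a perturbation whose size is governed by the modulus of continuity $\omega$ of $\fra$. First I would record the weak solution: by Lions' theorem the form $\fra$ furnishes, for every $f\in L^2(0,\tau;V')$ and $u_0\in H:=L^2(\Omega)$, a unique $u\in L^2(0,\tau;V)\cap H^1(0,\tau;V')$ with $u(0)=u_0$, together with the energy estimate in these norms. The content of the theorem is to \emph{upgrade} this to $u'\in L^p(0,\tau;H)$ and $L(\cdot)u\in L^p(0,\tau;H)$ when $f\in L^p(0,\tau;H)$; since $L(\cdot)u=f-u'$, it suffices to bound $u'$. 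I would treat the homogeneous case $u_0=0$ first and recover the inhomogeneous one at the end, working separately on each of the finitely many sub-intervals on which $\fra$ is genuinely $C^\alpha$ and gluing (the number $N$ of pieces enters the constants).

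The autonomous building block costs nothing beyond ellipticity. For each fixed $s$ the numerical range of $L(s)$ lies, by \eqref{eq0}, in a fixed sector $\Sigma_\theta$ with $\theta<\tfrac\pi2$ depending only on $\kappa$ and $C$; hence $L(s)$ is $m$-accretive and generates a bounded analytic semigroup, uniformly in $s$. On the Hilbert space $H$ this already yields maximal $L^2$-regularity (de Simon), and since $R$-boundedness coincides with uniform boundedness on $H$, the $R$-sectoriality required by Weis' theorem is automatic; thus the frozen problems $v'+L(s)v=g$, $v(0)=0$, have maximal $L^p$-regularity for all $p\in(1,\infty)$ with constants uniform in $s$. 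The Kato hypothesis \eqref{eq2} is \emph{not} needed here: its role is to make the form-perturbation visible at the level of operators on $H$.

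This is the heart of the matter, and the main obstacle. The perturbation $L(t)-L(s)$ is only a difference of forms, bounded $V\to V'$ with norm $\lesssim\omega(|t-s|)$, and is \emph{not} bounded from $D(L)$ into $H$, so no Neumann series can be closed directly in $L^p(0,\tau;H)$. The resolution, and the one place where \eqref{eq2} is essential, is to factor it through the square roots: setting (after a harmless spectral shift $L\rightsquigarrow\nu+L$ to ensure invertibility)
\[
M(t,s):=L(t)^{-1/2}\bigl(L(t)-L(s)\bigr)L(s)^{-1/2},
\]
the uniform equivalence $\|\sqrt{L(s)}f\|_H\approx\|f\|_V$ of \eqref{eq2} shows that $L(s)^{-1/2}\colon H\to V$ and $L(t)^{-1/2}\colon V'\to H$ are bounded uniformly in $s,t$, whence $\|M(t,s)\|_{H\to H}\lesssim\omega(|t-s|)$. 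Inserting $L(t)-L(s)=L(t)^{1/2}M(t,s)L(s)^{1/2}$ into the Duhamel comparison of the non-autonomous propagator with the frozen semigroups, and using the analytic bounds $\|L^{a}e^{-rL}\|_{H\to H}\lesssim r^{-a}$, produces a remainder governed by the kernel
\[
\frac{\omega(|t-s|)}{|t-s|^{3/2}},
\]
the exponent $\tfrac32$ reflecting the order-$\tfrac32$ smoothing $\|L^{3/2}e^{-rL}\|\lesssim r^{-3/2}$ and matching the kernel $|t-s|^{-3/2}$ of the half-derivative $\partial^{1/2}$. Its integrability against $dt$ is \emph{exactly} condition \eqref{eq1}, i.e. $\alpha>\tfrac12$ for $\omega(t)=t^\alpha$. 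The genuinely delicate point is that this kernel sits at the borderline of absolute integrability, so the estimate cannot be closed by Schur's test alone: one must exploit the singular-integral (multiplier) structure inherited from the frozen operators and treat the perturbation jointly with the leading term.

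Finally I would assemble the bound. For $p=2$ the leading frozen part is controlled by the square-function characterization of the bounded $H^\infty$-calculus of $L(s)$, while the remainder, controlled under \eqref{eq1}, is absorbed by a perturbation argument; tracking constants gives \eqref{apriori}, with the initial datum $u_0$ entering through the trace space $(H,D(L(0)))_{1/2,2}=D(\sqrt{L(0)})=V$, which explains the norm $\|u_0\|_V$. For $p\neq2$ one replaces square functions by the $R$-boundedness of the families $\{L(s)^{a}e^{-rL(s)}\}$ and invokes the operator-valued Fourier multiplier theorem of Weis on the UMD space $H$; the same kernel bound, now read as an $R$-bounded symbol, yields maximal $L^p$-regularity for $u_0=0$ under \eqref{eq1}. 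The passage to general $u_0\in(H,D(L(0)))_{1-1/p,p}$ invokes the inhomogeneous maximal $L^p$-regularity of the frozen operator $L(0)$, whose sharp space of initial values is precisely this real-interpolation space; the stronger $p$-Dini condition \eqref{eq:p-Dini} is what guarantees that the time-variation of the coefficients across the initial layer remains compatible with this class, reducing the case to the homogeneous one and yielding \eqref{apriori-p}. All constants depend only on the ellipticity constants in \eqref{eq0}, the data of $\omega$ (or on $\alpha$ and the number $N$ of pieces), and on $p$.
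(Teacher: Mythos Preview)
The paper does not prove this theorem; it is quoted from Haak--Ouhabaz \cite{HO} (Theorem~2 and Corollary~4). So I compare your sketch with what that argument actually needs.

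Your outline is in the right spirit---freeze coefficients, compare via Duhamel, control the remainder by a kernel $\omega(|t-s|)/|t-s|^{3/2}$, and use $R$-boundedness/Weis for $p\neq2$. But you have misidentified the role of the Kato hypothesis \eqref{eq2}, and this produces a real gap in part~2).

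You claim that \eqref{eq2} is ``the one place'' needed, to factor $L(t)-L(s)$ through square roots via $M(t,s)=L(t)^{-1/2}(L(t)-L(s))L(s)^{-1/2}$. But part~2) does \emph{not} assume \eqref{eq2}; it assumes only the Dini condition \eqref{eq1}. Your argument, as written, therefore does not prove part~2). In \cite{HO} the perturbation step in the continuous case avoids the Kato property entirely: one exploits the form structure directly, using estimates of the type
\[
\bigl\|(L(t)-L(s))(\lambda+L(s))^{-1}\bigr\|_{H\to V'}\lesssim \omega(|t-s|)\,|\lambda|^{-1/2},
\qquad
\bigl\|(\lambda+L(t))^{-1}\bigr\|_{V'\to H}\lesssim |\lambda|^{-1/2},
\]
and works through $V'$ rather than through $D(\sqrt{L})$. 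The half-order of smoothing on each side is free from coercivity and boundedness of the form; no identification $D(\sqrt{L(t)})=V$ is needed.

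Conversely, in part~1) the true role of \eqref{eq2} is not the perturbation step but the \emph{gluing across jumps}: at the end of a sub-interval the solution lies in the trace space $(H,D(L(\tau_j^-)))_{1/2,2}=D(\sqrt{L(\tau_j^-)})$, and one needs this to equal $V$ so that it serves as an admissible initial datum for the next piece (and so that the final estimate reads $\|u_0\|_V$). The paper points to exactly this: ``the condition $D(\sqrt{L(t)})=V$ cannot be removed if $\fra$ has (at least) one jump, see \cite{Di}.'' You mention gluing but do not link \eqref{eq2} to it; instead you spend \eqref{eq2} on a step that does not require it and leave the step that does require it unjustified.
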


Recall that \eqref{eq:evol-eq} has maximal $L^p$--regularity in $L^2(\Omega)$ if for every $f \in L^p(0, \tau, L^2(\Omega))$ there exists a unique $u \in W^{1,p}(0, \tau, L^2(\Omega))$, $u(t) \in D(L(t))$ for a.e. $t \in (0, \tau)$ and $u$ satisfies \eqref{eq:evol-eq} for a.e. $t \in (0, \tau)$. 
We recall that $(L^2(\Omega), D(L(0)))_{1- \frac{1}{p}, p}$ is the real interpolation space and the $L^p$-norm in the apriori estimates \eqref{apriori} and \eqref{apriori-p} is the  norm of  $L^p(0, \tau, L^2(\Omega))$. 

Let us also mention that the maximal $L^2$--regularity holds under the slightly weaker  regularity property  that the map $t \mapsto L(t)$ is piecewise in $H^{\frac{1}{2}}(0, \tau, {\mathcal B}(V,V'))$ ($V'$ is the dual space of $V$) together with a minimal Dini condition. This is proved in \cite{AO} in an abstract setting.  As we mentioned in the introduction, it is not known whether the  maximal regularity holds for elliptic operators with measurable coefficients in the $t$-variable  (and in the $x$-variable as we do here).  The counter-example given in \cite{Fackler} is not a  differential operator.

We shall apply the previous theorem in the case where $u(0) = 0$. In this case, we have maximal $L^p$--regularity for every $p \in (1, \infty)$ provided $\fra$ satisfies \eqref{eq1}. If $\fra$ is discontinuous, we assume that it is piecewise $C^\alpha$ for some $\alpha > \frac{1}{2}$ and in addition \eqref{eq2} holds. For general forms, the condition $D\left(\sqrt{L(t)}\right) = V$  cannot be removed if $\fra$ has (at least) one jump,  see \cite{Di}. 

\medskip
Set ${\mathcal H} = L^2(0,\tau, L^2(\Omega))$ and define $\partial = \frac{\partial }{\partial t}$ with domain
\[
D(\partial) = {}_0H^1 := \{ u \in H^1(0, \tau, L^2(\Omega)), \, u(0) = 0 \}.
\]
Define also the operator $L$ by $(L u)(t) = L(t)u(t)$ with domain 
\[
D(L) = \left\{ u \in L^2(0, \tau, L^2(\Omega)),\, u(t) \in D(L(t)) \ {\rm a.e.}\  t \ {\rm and} \   L(\cdot)u(\cdot) \in {\mathcal H} \right\}.
\]
 
\begin{lemma}\label{lem1} Suppose either \eqref{eq1} or  $\fra$ is piecewise $C^\alpha$ for some $\alpha > \frac{1}{2}$ and \eqref{eq2} holds.  Define the parabolic operator 
\[ \LL = \partial + L \quad {\rm with\ domain} \ 
  \ D(\LL) =\!\!{}\ _0H^1 \cap D(L).
  \]
  Then $\LL$ is invertible, maximal accretive and has dense domain. The operators $\partial \LL^{-1}$ and $L \LL^{-1}$ are bounded on 
  ${\mathcal H}$.
 \end{lemma}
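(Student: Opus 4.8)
\medskip
\noindent\emph{Sketch of proof.}
The plan is to read off all four assertions from the maximal $L^2$-regularity of the Cauchy problem \eqref{eq:evol-eq} with zero initial value, together with standard Hilbert-space operator theory. Under either hypothesis of the lemma, Theorem~\ref{thm-HO} applies with $u_0 = 0$ and $p = 2$ (the piecewise-$C^\alpha$ case together with \eqref{eq2} via part~1, the case \eqref{eq1} via part~2 with $p=2$): for each $f \in \mathcal H$ there is a unique $u$ with $u \in H^1(0,\tau,L^2(\Omega))$, $u(0) = 0$, $u(t) \in D(L(t))$ for a.e.\ $t$, $L(\cdot)u(\cdot) \in \mathcal H$, and $\partial u + L(\cdot)u(\cdot) = f$. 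In other words $u \in D(\LL) = {}_0H^1 \cap D(L)$ and $\LL u = f$, so $\LL \colon D(\LL) \to \mathcal H$ is a bijection. The a priori estimate \eqref{apriori} (respectively \eqref{apriori-p} with $p=2$), in which the term $\|u_0\|_V$ disappears, then gives $\|\LL^{-1} f\|_{\mathcal H} + \|\partial \LL^{-1} f\|_{\mathcal H} + \|L \LL^{-1} f\|_{\mathcal H} \le C \|f\|_{\mathcal H}$ for all $f \in \mathcal H$. Hence $\LL^{-1}$ is bounded on $\mathcal H$ (so $\LL$ is closed, invertible, and $0 \in \rho(\LL)$), and $\partial \LL^{-1}$ and $L \LL^{-1}$ are bounded on $\mathcal H$.

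It remains to prove that $\LL$ is accretive and maximal accretive with dense domain. For accretivity, take $u \in D(\LL)$; integration by parts in the time variable, using $u(0)=0$, gives $\mathrm{Re}\,\langle \partial u, u\rangle_{\mathcal H} = \tfrac12 \|u(\tau)\|_{L^2(\Omega)}^2 \ge 0$, while, since $u(t) \in D(L(t)) \subseteq V$ for a.e.\ $t$ with $L(t)$ the operator associated to $\fra(t,\cdot,\cdot)$, the ellipticity condition \eqref{eq0} yields $\mathrm{Re}\,\langle L u, u\rangle_{\mathcal H} = \mathrm{Re} \int_0^\tau \fra(t, u(t), u(t))\, dt \ge \kappa \int_0^\tau \|\nabla_x u(t)\|_{L^2(\Omega)}^2\, dt \ge 0$; adding, $\mathrm{Re}\,\langle \LL u, u\rangle_{\mathcal H} \ge 0$. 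Next, boundedness of $\LL^{-1}$ shows $\lambda + \LL = \LL(I + \lambda \LL^{-1})$ is surjective for $0 < \lambda < \|\LL^{-1}\|^{-1}$; fix such a $\lambda$. To see $D(\LL)$ is dense, let $y \perp D(\LL)$ and write $y = (\lambda + \LL) x$ with $x \in D(\LL)$; then $0 = \mathrm{Re}\,\langle y, x\rangle_{\mathcal H} = \lambda \|x\|_{\mathcal H}^2 + \mathrm{Re}\,\langle \LL x, x\rangle_{\mathcal H} \ge \lambda \|x\|_{\mathcal H}^2$, so $x = 0$ and hence $y = 0$. Now $\LL$ is densely defined and accretive with $\lambda + \LL$ surjective for some $\lambda > 0$, so by the Lumer--Phillips theorem $-\LL$ generates a contraction semigroup, i.e.\ $\LL$ is maximal accretive.

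I expect no genuine obstacle in this lemma: the hard analytic input is entirely contained in the maximal regularity result Theorem~\ref{thm-HO} (from \cite{HO}), and the rest is routine bookkeeping. The only points needing a little care are that the a priori estimates be invoked with the initial-value term $\|u_0\|_V$ set equal to $0$, that the solution supplied by maximal regularity genuinely belongs to ${}_0H^1 \cap D(L)$ so that $\LL^{-1}$ maps $\mathcal H$ into $D(\LL)$, and the order in which one passes from accretivity plus surjectivity of one $\lambda + \LL$ to density of the domain and then to $m$-accretivity.
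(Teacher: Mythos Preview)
Your proof is correct and follows essentially the same approach as the paper: both draw invertibility and the boundedness of $\partial\LL^{-1}$, $L\LL^{-1}$ from the maximal regularity Theorem~\ref{thm-HO} and its a~priori estimate, and obtain accretivity from integration by parts together with ellipticity. You simply spell out in full what the paper leaves terse---in particular the density-of-domain step (which the paper dismisses as ``a standard duality argument'') and the passage to maximal accretivity via Lumer--Phillips.
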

 \begin{proof} 
Integration by parts shows that $\partial$ is accretive. Then $\LL$ is  accretive as the sum of two accretive operators. It is invertible on $\mathcal H$ by Theorem \ref{thm-HO}. The fact that  $\partial \LL^{-1}$ and $L \LL^{-1}$  are  bounded operators on  ${\mathcal H}$ is a consequence of  the a priori estimate \eqref{apriori} (or \eqref{apriori-p}). 
A standard  duality argument shows that $\LL$ is densely defined. 
\end{proof}

Next, for a given $f \in {\mathcal H}$,  $u(t):= \int_0^t f(s)\, ds$ satisfies $ u \in D(\partial)$ and $\partial u = f$. Therefore $\partial$ is invertible and it is maximal accretive. In particular, this allows us to  define its square root $\sqrt{\partial}$ as a maximal accretive operator.  Similarly, $L$ is maximal accretive since  one checks that  $(( I + L)^{-1} u) (t) = (I + L(t))^{-1} u (t)$. Therefore, $\sqrt{L}$ is also well defined.

\begin{lemma}\label{lem2} Suppose either \eqref{eq1} or  $\fra$ is piecewise $C^\alpha$ for some $\alpha > \frac{1}{2}$ and \eqref{eq2} holds. We have 
\begin{equation}\label{eq5}
\| \sqrt{\partial}\, u \|_{\mathcal H} + \| \sqrt{L}\, u \|_{\mathcal H} \le C \| \sqrt{\LL}\, u \|_{\mathcal H}
\end{equation}
for all $u \in D(\sqrt{\LL})$. In particular, $D\left(\sqrt{\LL} \right) \subset D\left(\sqrt{\partial} \right) \cap D\left(\sqrt{L} \right)$. 
\end{lemma}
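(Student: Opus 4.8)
The plan is to exploit the fact that $\LL$ is maximal accretive on the Hilbert space $\mathcal H$, so that it admits a bounded holomorphic functional calculus on the right half-plane and, in particular, bounded imaginary powers; more elementarily, for a maximal accretive operator one has the classical moment-type inequality $\|\sqrt{\LL}\,u\|_{\mathcal H}^2 \approx \mathrm{Re}\,\langle \LL u, u\rangle$ on $D(\LL)$, together with the representation $\sqrt{\LL}^{-1} = \frac1\pi\int_0^\infty \lambda^{-1/2}(\lambda+\LL)^{-1}\,d\lambda$. First I would reduce the claim to a comparison of the associated quadratic forms: since $\partial$ and $L$ are each maximal accretive, $\|\sqrt{\partial}\,u\|^2_{\mathcal H} = \mathrm{Re}\,\langle \partial u, u\rangle$ for $u\in D(\partial)$ and similarly for $L$ (these are the form norms of the accretive operators $\partial$ and $L$), and $\LL = \partial + L$ on $D(\LL) = {}_0H^1\cap D(L)$ gives $\mathrm{Re}\,\langle \LL u,u\rangle = \mathrm{Re}\,\langle\partial u,u\rangle + \mathrm{Re}\,\langle L u,u\rangle$. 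Hence on the core $D(\LL)$ one immediately gets $\|\sqrt{\partial}\,u\|^2_{\mathcal H} + \|\sqrt{L}\,u\|^2_{\mathcal H} \le C\,\mathrm{Re}\,\langle\LL u,u\rangle \le C\,\|\sqrt{\LL}\,u\|_{\mathcal H}\,\|u\|_{\mathcal H} \le C'\,\|\sqrt{\LL}\,u\|^2_{\mathcal H}$, using that $\|u\|_{\mathcal H}\le C\|\sqrt{\LL}\,u\|_{\mathcal H}$ because $\LL$ is invertible (Lemma \ref{lem1}) and hence so is $\sqrt{\LL}$.

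The remaining issue is to pass from the core $D(\LL)$ to all of $D(\sqrt{\LL})$, i.e. to show the estimate survives the closure. Here I would argue by density: $D(\LL)$ is a core for $\sqrt{\LL}$ (true for any sectorial/accretive operator, since $\LL(1+\varepsilon\LL)^{-1}u\to u$ in the graph norm of $\sqrt{\LL}$), so given $u\in D(\sqrt{\LL})$ pick $u_n\in D(\LL)$ with $u_n\to u$ and $\sqrt{\LL}\,u_n\to\sqrt{\LL}\,u$ in $\mathcal H$. The inequality just proved shows $(\sqrt{\partial}\,u_n)_n$ and $(\sqrt{L}\,u_n)_n$ are Cauchy in $\mathcal H$; since $\sqrt{\partial}$ and $\sqrt{L}$ are closed and $u_n\to u$, we conclude $u\in D(\sqrt{\partial})\cap D(\sqrt{L})$ with $\sqrt{\partial}\,u_n\to\sqrt{\partial}\,u$, $\sqrt{L}\,u_n\to\sqrt{L}\,u$, and the inequality \eqref{eq5} passes to the limit. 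This also yields the stated inclusion $D(\sqrt{\LL})\subset D(\sqrt{\partial})\cap D(\sqrt{L})$.

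The main obstacle I anticipate is the first, form-level step: one must be careful that $\mathrm{Re}\,\langle\partial u,u\rangle$ and $\mathrm{Re}\,\langle L u,u\rangle$ really are comparable to $\|\sqrt{\partial}\,u\|^2$ and $\|\sqrt{L}\,u\|^2$ on the relevant domain — for $\partial$ this is integration by parts giving $\mathrm{Re}\,\langle\partial u,u\rangle = \tfrac12\|u(\tau)\|_{L^2(\Omega)}^2\ge 0$, which is \emph{not} a norm equivalence on its own, so the cleanest route is instead to invoke that a maximal accretive $T$ satisfies $\|\sqrt T\,u\| = \|T(\nu+T)^{-1/2}\cdot(\nu+T)^{-1/2}u\|$-type bounds, or simply the operator inequality $\|\sqrt{\partial}\,u\|^2\le \|\sqrt{\LL}\,u\|^2$ coming from $\partial\le \LL$ in the sense of accretive operators (because $\LL-\partial = L$ is accretive) combined with Heinz's inequality / monotonicity of the square root for accretive operators. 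Phrasing the argument through Heinz's inequality for the (not necessarily self-adjoint) accretive operators $\partial\le\LL$ and $L\le\LL$ is the robust way to handle the non-self-adjointness, and is the step whose details I would write out carefully.
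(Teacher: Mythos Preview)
Your proposal has a genuine gap, and it lies precisely where you suspect. The form-level step does not go through, and the proposed repair via a Heinz--type inequality is not available for non-self-adjoint operators.

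Concretely: you yourself compute $\mathrm{Re}\,\langle \partial u,u\rangle = \tfrac12\|u(\tau)\|_{L^2(\Omega)}^2$, and this quantity has no control over $\|\sqrt{\partial}\,u\|_{\mathcal H}^2$; there are plenty of $u\in D(\partial)$ with $u(\tau)=0$ and $\|\sqrt{\partial}\,u\|_{\mathcal H}>0$. So the chain $\|\sqrt{\partial}\,u\|^2 \le C\,\mathrm{Re}\,\langle \partial u,u\rangle \le C\,\mathrm{Re}\,\langle \LL u,u\rangle$ fails at the first inequality. Your fallback, ``$\partial\le\LL$ in the accretive sense $\Rightarrow$ $\|\sqrt{\partial}\,u\|\le C\|\sqrt{\LL}\,u\|$ by Heinz'', is simply false for m-accretive (non-self-adjoint) operators: the L\"owner--Heinz monotonicity of $t\mapsto t^{1/2}$ is a theorem about self-adjoint positive operators and has no general analogue in the accretive setting. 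A further warning sign is that your argument never uses the boundedness of $\partial\LL^{-1}$ and $L\LL^{-1}$ (i.e.\ the maximal regularity encoded in Lemma~\ref{lem1}); but those bounds are exactly what the hypotheses of the lemma are there to guarantee, and they are essential.

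The paper's proof takes a different route: it uses that on a Hilbert space every m-accretive operator has bounded imaginary powers, $\|\partial^{is}\|,\ \|L^{is}\|,\ \|\LL^{is}\|\le e^{\pi|s|/2}$, and then applies Stein interpolation to the analytic family $T(z)=\partial^{z}\LL^{-z}$. On $\mathrm{Re}\,z=0$ one has $\|T(is)\|\le e^{\pi|s|}$; on $\mathrm{Re}\,z=1$, the maximal-regularity bound $\partial\LL^{-1}\in\mathcal B(\mathcal H)$ from Lemma~\ref{lem1} combined with the imaginary powers gives $\|T(1+is)\|\le C e^{\pi|s|}$. Interpolating yields that $T(\tfrac12)=\partial^{1/2}\LL^{-1/2}$ is bounded, hence $\|\sqrt{\partial}\,u\|\le C\|\sqrt{\LL}\,u\|$; the same argument with $L$ in place of $\partial$ gives the other half of \eqref{eq5}. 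This is where the regularity assumption on $\fra$ (via maximal regularity) actually enters the proof.
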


\begin{proof} 
 Since  $\partial$ and $L$  are  maximal accretive it is well  known (see e.g. \cite{Kato})  that  they have bounded imaginary powers 
\begin{equation}\label{eq3}
 \| \partial^{is}  \|_{{\mathcal B}({\mathcal H})} \le e^{\frac{\pi}{2} |s|} \quad {\rm and} \quad    \| L^{is}  \|_{{\mathcal B}({\mathcal H})} \le e^{\frac{\pi}{2} |s|},  \ s \in \RR.
 \end{equation}
For the same reason, $\LL$ also satisfies 
\begin{equation}\label{eq4}
 \| \LL^{is}  \|_{{\mathcal B}({\mathcal H})} \le e^{\frac{\pi}{2} |s|}, \ s \in \RR.
 \end{equation}
Define $T(z) := \partial^z \LL^{-z}$. Then for $z = is$ with $s \in \RR$, it follows from \eqref{eq3} and \eqref{eq4} that $T(is)$ is bounded on ${\mathcal H}$ with norm bounded by $e^{\pi |s|}$. Using Lemma \ref{lem1},  \eqref{eq3} and \eqref{eq4} we see  that $T(1+is)$ is also bounded on ${\mathcal H}$ with norm bounded by $C\, e^{\pi |s|}$. This implies  that  $\partial^{1/2} \LL^{-1/2}$ is a bounded operator on ${\mathcal H}$. Applying the same reasoning with $L$ in place of $\partial$ shows that $L^{1/2} \LL^{-1/2}$ is also bounded  on ${\mathcal H}$. This proves the lemma.
\end{proof}

\begin{lemma}\label{lem3} Suppose either \eqref{eq1} or   $\fra$ is piecewise $C^\alpha$ for some $\alpha > \frac{1}{2}$ and \eqref{eq2} holds. 
Then there exists a constant $c>0$ such that 
\begin{equation}\label{eq7}
 c \| \sqrt{\LL}\, u \|_{\mathcal H} \le \| \sqrt{\partial}\, u \|_{\mathcal H} + \| \sqrt{L}\, u \|_{\mathcal H}
\end{equation}
for all $u \in D(\sqrt{L}) \cap D(\sqrt{\partial})$. In particular, $ D\left(\sqrt{\partial} \right) \cap D\left(\sqrt{L} \right) \subset D\left(\sqrt{\LL} \right)$. 
\end{lemma}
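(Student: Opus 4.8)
The plan is to prove the reverse inequality \eqref{eq7} by the same complex-interpolation / imaginary-powers argument as in Lemma~\ref{lem2}, but now applied to the operator-valued function $S(z) := \LL^{z}(\partial + L)^{-z}$ — wait, more precisely, since $\partial$ and $L$ commute only in a weak sense, I would instead work with $S(z):=\LL^{z}\bigl(\sqrt{\partial}+\sqrt{L}\,\bigr)$-type expressions. Let me restate the strategy cleanly. First I would set $B := \sqrt{\partial} + \sqrt{L}$ on the domain $D(\sqrt{\partial})\cap D(\sqrt{L})$; the goal is to show $\|\sqrt{\LL}\,u\|_{\mathcal H}\le C\|Bu\|_{\mathcal H}$. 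The natural device is to establish that $\LL^{1/2}$ is controlled by the half-powers of the two summands, which by duality is essentially the \emph{dual} statement to Lemma~\ref{lem2}. Indeed, Lemma~\ref{lem2} says $\sqrt{\partial}\,\LL^{-1/2}$ and $\sqrt{L}\,\LL^{-1/2}$ are bounded on $\mathcal H$; taking adjoints, $\LL^{*-1/2}\sqrt{\partial^{*}}$ and $\LL^{*-1/2}\sqrt{L^{*}}$ are bounded, where $\partial^{*}=-\partial$ with domain $\{u\in H^1(0,\tau,L^2(\Omega)):u(\tau)=0\}$ and $L^{*}$ is the adjoint parabolic-spatial operator. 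Since $\LL^{*}=\partial^{*}+L^{*}=-\partial+L^{*}$ satisfies exactly the same hypotheses (the adjoint form $\fra^{*}(t,u,v)=\overline{\fra(t,v,u)}$ is again piecewise $C^\alpha$, satisfies \eqref{eq2}, and the backward Cauchy problem has maximal regularity by time reversal), Lemma~\ref{lem2} applies verbatim to $\LL^{*}$ as well, giving the boundedness of $\sqrt{\partial^{*}}\,\LL^{*-1/2}$ and $\sqrt{L^{*}}\,\LL^{*-1/2}$; taking adjoints of \emph{those} yields $\LL^{-1/2}\sqrt{\partial}$ bounded — but that is not quite \eqref{eq7} either, since I want $\LL^{1/2}$ on the left. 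Let me instead take the following cleaner route.

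I would prove \eqref{eq7} directly by a three-lines/complex-interpolation argument on $R(z):=\LL^{z}\bigl(1+\partial\bigr)^{-z}$ and $R'(z):=\LL^{z}(1+L)^{-z}$. By \eqref{eq3}, \eqref{eq4} the boundary values $R(is)$, $R'(is)$ are bounded with norm $\le e^{\pi|s|}$. For the other boundary, $R(1+is)=\LL\,(1+\partial)^{-1}\,(1+\partial)^{-is}\LL^{is}$; and $\LL(1+\partial)^{-1} = (\partial+L)(1+\partial)^{-1} = \partial(1+\partial)^{-1} + L(1+\partial)^{-1}$. The first summand $\partial(1+\partial)^{-1}$ is bounded (it is $1-(1+\partial)^{-1}$). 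The key point is the \emph{second} summand $L(1+\partial)^{-1}$: this is where I expect the main obstacle, because $L$ and $\partial$ do not commute and there is no a priori reason $L(1+\partial)^{-1}$ should be bounded on $\mathcal H$ — in fact it is essentially the maximal-regularity statement in disguise. So I would not go through $(1+\partial)^{-1}$ but rather exploit that $\LL^{-1}$ itself is the good object: from Lemma~\ref{lem1}, $\partial\LL^{-1}$ and $L\LL^{-1}$ are bounded, i.e. $\LL$ \emph{dominates} $\partial$ and $L$ separately. By a standard functional-calculus / interpolation consequence (Heinz–Kato type, using the bounded imaginary powers \eqref{eq3}, \eqref{eq4} on both sides), boundedness of $\partial\LL^{-1}$ and $L\LL^{-1}$ upgrades to boundedness of $\sqrt{\partial}\,\LL^{-1/2}$ and $\sqrt{L}\,\LL^{-1/2}$ (that is Lemma~\ref{lem2}), and \emph{by the dual/symmetric version} to boundedness of $\LL^{1/2}(1+\partial)^{-1/2}$... which again needs $\LL(1+\partial)^{-1}$ bounded.

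So the honest plan: the inequality \eqref{eq7} is the \emph{converse} direction and I would obtain it by duality from Lemma~\ref{lem2} applied to the adjoint operator $\LL^{*}$. Concretely: (i) check that $\LL^{*} = -\partial + L^{*}$ on the time-reversed domain, with $L^{*}(t)$ the operator associated to $\fra^{*}(t,\cdot,\cdot)$, still satisfies all hypotheses of Lemma~\ref{lem2} — this uses that \eqref{eq2} is self-dual in $t$, that $\fra^{*}$ is piecewise $C^\alpha$ with the same exponent, and that maximal regularity for the backward problem follows from the forward one by $t\mapsto\tau-t$; (ii) apply Lemma~\ref{lem2} to $\LL^{*}$: $\|\sqrt{\partial^{*}}\,v\|_{\mathcal H}+\|\sqrt{L^{*}}\,v\|_{\mathcal H}\le C\|\sqrt{\LL^{*}}\,v\|_{\mathcal H}$ for $v\in D(\sqrt{\LL^{*}})$; (iii) dualize: for $u\in D(\sqrt{\partial})\cap D(\sqrt{L})$ and $v\in D(\sqrt{\LL^{*}})$, write $\langle\sqrt{\LL}\,u,v\rangle = \langle u,\sqrt{\LL^{*}}\,v\rangle$ — but to make this pairing legitimate I need $u\in D(\sqrt{\LL})$, which is circular; instead I pair $\langle u,\LL^{*}w\rangle$ with $w=\LL^{*-1}v$ and estimate $\langle\partial u,w\rangle+\langle Lu,w\rangle = \langle\sqrt\partial u,\sqrt{\partial^{*}}w\rangle+\langle\sqrt L u,\sqrt{L^{*}}w\rangle\le(\|\sqrt\partial u\|+\|\sqrt L u\|)\,(\|\sqrt{\partial^{*}}w\|+\|\sqrt{L^{*}}w\|)\le C(\|\sqrt\partial u\|+\|\sqrt L u\|)\,\|\sqrt{\LL^{*}}w\|$, where the last step is step~(ii); and $\|\sqrt{\LL^{*}}w\| = \|\sqrt{\LL^{*}}\,\LL^{*-1}v\| = \|\LL^{*-1/2}v\|$, so taking the supremum over $v$ with $\|\LL^{*-1/2}v\|\le1$, i.e. over $g=\LL^{*-1/2}v$ in the unit ball of $\mathcal H$, gives $\sup_{\|g\|\le1}|\langle u,\LL^{*1/2}g\rangle| = \|\LL^{1/2}u\|\le C(\|\sqrt\partial u\|+\|\sqrt L u\|)$, which is exactly \eqref{eq7}, together with $u\in D(\sqrt{\LL})$. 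The main obstacle is the bookkeeping in (iii) — justifying that the sesquilinear pairing closes on the right cores and that $D(\sqrt{\LL^{*}})$ is rich enough (it contains $\LL^{*-1/2}(\mathcal H)$, which is all of $\mathcal H$ restricted suitably, hence dense) — but no new analytic input beyond Lemma~\ref{lem2} for $\LL^{*}$ is needed.
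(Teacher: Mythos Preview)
Your proposal is correct and, after the exploratory detours, lands on essentially the same duality argument as the paper: verify that $\LL^{*}=\partial^{*}+L^{*}$ (with $\partial^{*}$ on the time-reversed domain $\{v:v(\tau)=0\}$) satisfies the hypotheses via maximal regularity of the retrograde problem, apply Lemma~\ref{lem2} to $\LL^{*}$, and then pair to conclude that $v\mapsto (u,\sqrt{\LL^{*}}\,v)$ is bounded, whence $u\in D((\sqrt{\LL^{*}})^{*})=D(\sqrt{\LL})$ with the desired estimate. The paper's version of step~(iii) is slightly cleaner---it tests directly against $\sqrt{\LL^{*}}\,v$ by writing $(u,\sqrt{\LL^{*}}\,v)=(u,(\partial^{*}+L^{*})(\LL^{*})^{-1/2}v)$ and splitting, rather than going through $w=\LL^{*-1}v$---but the content is the same.
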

\begin{proof} The  proof uses  a  duality argument.\\
 Firstly, one checks easily that the adjoint of $\partial$ is given by 
 \[ \partial^* v(t)  = - \frac{\partial v(t)}{\partial t}, \ D(\partial^*) = \{ v \in H^1(0, \tau, L^2(\Omega)),\  v(\tau) = 0 \}.
 \]
 The adjoint operator $L^*$ is defined similarly to  $L$ with $L(t)$ replaced by $L(t)^*$, i.e., $A(x,t)$ is replaced by its adjoint $A^*(x,t)$. On the other hand it is clear that the maximal regularity given by Theorem \ref{thm-HO} holds for the retrograde problem
 \begin{equation*}\label{eq:evol-eq-adjoint} 
\left\{
  \begin{array}{rcl}
     -\frac{ \partial v(t)}{\partial t}  + L(t)^*\,v(t) &=& f(t), \ t \in (0, \tau] \\
     v(\tau)&=&0.
  \end{array}
\right.
\end{equation*}
Using this we see as above that  the operator $\partial^* + L^*$, defined  on the intersection of the corresponding  domains, is invertible and it is maximal accretive. It turns out that this operator is the adjoint of $\LL$. Using the same proof as before,  Lemma \ref{lem2} applied to $\LL^*$ gives
 \begin{equation}\label{eq8}
\| \sqrt{\partial^*}\, v \|_{\mathcal H} + \| \sqrt{L^*}\, v \|_{\mathcal H} \le C \| \sqrt{\LL^*}\, v \|_{\mathcal H}
\end{equation}
for all $v \in D(\sqrt{\LL^*})$.\\
Let $ u \in D(\sqrt{L}) \cap D(\sqrt{\partial})$ and $v \in D(\sqrt{\LL^*})$. Then,
 \begin{eqnarray*}
 \left| ( u, \sqrt{\LL^*}\,  v )_{\mathcal H} \right| &=& \left| ( u, \LL^* (\LL^*)^{-1/2}\,  v )_{\mathcal H} \right| \\
 &=& \left| (  u, (\partial^* +L^*) (\LL^*)^{-1/2}\,  v  )_{\mathcal H} \right|\\
 &=& \left| (  \sqrt{\partial}\, u, \sqrt{\partial^*}\, (\LL^*)^{-1/2}\,  v  )_{\mathcal H}  + (  \sqrt{L}\, u, \sqrt{L^*}\, (\LL^*)^{-1/2}\,  v  )_{\mathcal H} \right|\\
 &\le& \left( \| \sqrt{\partial}\, u \|_{\mathcal H} + \| \sqrt{L}\, u \|_{\mathcal H} \right) \left( \| \sqrt{\partial^*}\,(\LL^*)^{-1/2}\, v \|_{\mathcal H} + \| \sqrt{L^*}\,(\LL^*)^{-1/2}\, v \|_{\mathcal H} \right)\\
 &\le& 2C \left( \| \sqrt{\partial}\, u \|_{\mathcal H} + \| \sqrt{L}\, u \|_{\mathcal H} \right) \left\|  v \right\|_{\mathcal H},
 \end{eqnarray*}
where we use  \eqref{eq8} to have the  final  inequality.  Hence, $v \mapsto ( u, \sqrt{\LL^*}\,  v )_{\mathcal H}$ extends to a continuous linear functional on $\mathcal H$. This implies that $u \in D\left(\sqrt{\LL} \right)$ as well as   \eqref{eq7}.
\end{proof}

\begin{proof}[Proof of Theorem \ref{thm1}]  

Under the sole assumption \eqref{eq1} or if  $\fra$ is piecewise $C^\alpha$ for some $\alpha > \frac{1}{2}$ and \eqref{eq2} holds we obtain from the previous lemmas that 
\begin{equation}\label{app-sqrt}
 \| \sqrt{\LL}\, u \|_{\mathcal H} \approx \| \sqrt{\partial}\, u \|_{\mathcal H} + \| \sqrt{L}\, u \|_{\mathcal H}
 \end{equation}
 for all $u \in D(\sqrt{\LL}) = D(\sqrt{L}) \cap D(\sqrt{\partial})$. 
On the other hand since  the operator $\partial$ has bounded imaginary powers it follows that $D(\sqrt{\partial})$ coincides with the complex interpolation space 
$[{}_0H^1, {\mathcal H} ]_{\frac{1}{2}}$. By  \cite{LM}, p. 68 or p. 257, this interpolation space coincides with
\[ \{ u \in H^{\frac{1}{2}}(0, \tau, L^2(\Omega)), \ \int_0^\tau \|u(t)\|_{L^2(\Omega)}^2\, \frac{dt}{t} < \infty \}.
\]
 In addition,  $\| \sqrt{\partial}\, u \|_{\mathcal H}$ is equivalent to  $\| u \|_{H^{\frac{1}{2}}(0,\tau, L^2(\Omega))} + \left( \int_0^\tau \| u(t) \|_{L^2(\Omega)}^2\, \frac{dt}{t} \right)^{1/2}$.\footnote{Remember that $\partial$ is invertible, hence the graph norm of $\sqrt{\partial} $ equivalent to $\| \sqrt{\partial}\,  u \|_{\mathcal H}$.}  As mentioned in the introduction, \eqref{eq2} implies that the quantities $\| \sqrt{L(t)}\, u(t) \|_{L^2(\Omega)} + \| u(t) \|_{L^2(\Omega)}$ and $\| \nabla_x u(t) \|_{L^2(\Omega)} + \| u(t) \|_{L^2(\Omega)}$ are equivalent with constants independent of $t \in (0, \tau)$. Therefore, $\| \sqrt{L}\, u \|_{\mathcal H} + \| u \|_{\mathcal H}$ and $\| \nabla_x u \|_{\mathcal H}  + 
 \| u \|_{\mathcal H} $ are equivalent. We use  this in \eqref{app-sqrt}  to obtain
\[
\| \sqrt{\LL}\, u \|_{\mathcal H} + \| u \|_{\mathcal H} \approx \| \sqrt{\partial}\, u \|_{\mathcal H} + \| \nabla_x u \|_{\mathcal H}  + 
 \| u \|_{\mathcal H}.
 \]
 From this and the fact that the operators  $\partial$ and  $\sqrt{\LL}$ are invertible  (cf. Lemma \ref{lem1}) we obtain the theorem. 
\end{proof}

\begin{remark}\label{rem}
1- In Theorem \ref{thm1} we could remove the (piecewise) regularity assumption in the $t$-variable by assuming that the Cauchy problem \eqref{eq:evol-eq} has maximal $L^2$-regularity in $L^2(\Omega)$. However, as we already mentioned in the introduction, it is not known whether  this maximal regularity is satisfied  when the coefficients $a_{kl}$ are merely measurable in $t$.\\ 
2-The proofs of Lemmas \ref{lem2} and \ref{lem3} do not use any specific property of the differential operators $L(t)$. These lemmas are valid in an abstract setting  of operators $L(t)$ which are associated with a family of sesquilinear forms 
\[ \fra : (0, \tau) \times V \times V \to \CC
\]
which are quasi-coercive and bounded with uniform constants in $t$. Here $V$ is a Hilbert space that is densely and continuously embedded into 
another given Hilbert space $H$. We define $\partial, L$ and $\LL$ as before. Under the sole assumption \eqref{eq1} we obtain $D(\sqrt{\LL}) = D(\sqrt{L}) \cap D(\sqrt{\partial})$ and 
 \[ \| \sqrt{\LL}\, u \|_{L^2(0,\tau, H)} \approx \| \sqrt{\partial}\, u \|_{L^2(0,\tau, H)} + \| \sqrt{L}\, u \|_{L^2(0,\tau, H)}.
 \]
 If $\fra$ is piecewise $C^\alpha$ for some $\alpha > \frac{1}{2}$, we assume in addition that \eqref{eq2} holds and we obtain the same conclusion. \\
 3- The ideas used in this section (as well as the next one) can also be  used to describe the domain of any  fractional power $D\left(\LL^\alpha \right)$ for 
 $\alpha \in (0,1)$.
\end{remark}

\section{$L^p(L^2)$-estimates}

In  the proofs of the previous section we used the maximal $L^2$-regularity given by Theorem \ref{thm-HO}. We take advantage that this latter theorem gives  also maximal $L^p$-regularity for every $p \in (1,\infty)$. We use this in the proof of  the $L^p(L^2)$-estimate of Theorem \ref{thm1-p}.\\
  Throughout this section we take the assumptions of Theorem \ref{thm1-p}, that is, we assume \eqref{eq0}, \eqref{eq2} and \eqref{om-p}. 
 
\medskip
 Fix $p \in (1, \infty)$ with $p \not=2$. Define on $L^p(0, \tau, L^2(\Omega))$ the operator $\partial = \frac{\partial}{\partial t}$  with domain 
\[
D(\partial) = {}_0W^{1,p} := \{ u \in W^{1,p}(0, \tau, L^2(\Omega)), \ u(0) = 0 \}.
\]
 It is well known  that $\partial$ has bounded imaginary powers on
$L^p(0, \tau, L^2(\Omega))$ (see e.g. \cite{DV}). It is not difficult to prove that $\partial$ is accretive and invertible. Hence, $\partial$ is maximal accretive. \\
As in the previous section, we define $L$ by $(L u)(t) := L(t)u(t)$ with domain
\[
D(L) = \left\{ u \in L^p(0, \tau, L^2(\Omega)),\, u(t) \in D(L(t)) \ {\rm a.e.}\  t \ {\rm and} \   L(\cdot)u(\cdot) \in L^p(0, \tau, L^2(\Omega)) \right\}.
\]
Then $L$ is maximal accretive. Since for fixed $t \in (0, \tau)$,
\[
\| L(t)^{is} \|_{{\mathcal B}(L^2(\Omega))} \le e^{\frac{\pi}{2} |s|}
\]
and  $(L^{is} u)(t) = L(t)^{is}u(t)$,\footnote{one starts from the resolvent formula $((\lambda I + L)^{-1}u)(t) = (\lambda I + L(t))^{-1}u(t)$ and then by integration along an appropriate contour to define the holomorphic functional calculus one obtains such a formula.} it follows that 
the operator $L$ has bounded imaginary powers on $L^p(0, \tau, L^2(\Omega))$. We define $\LL = \partial + L$ on the intersection of the domains. It follows from  Theorem \ref{thm-HO} that the operator $\LL$ is invertible. In particular, it is maximal accretive.  In contrast to the  Hilbert space setting  of Theorem \ref{thm1}, the boundedness of imaginary powers of $\LL$ is not a consequence of maximal accretivity. So we have to use a different  argument. 
\begin{proposition}\label{propPS}
There exists a $\nu \ge 0$ such that the operator $\LL + \nu$ has a bounded holomorphic functional calculus on $L^p(0, \tau, L^2(\Omega))$. In particular, $\LL + \nu$ has bounded imaginary powers.
\end{proposition}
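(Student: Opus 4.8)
The plan is to obtain the bounded holomorphic functional calculus of $\LL+\nu$ by perturbing the operator $\partial$, whose $H^\infty$-calculus on $L^p(0,\tau,L^2(\Omega))$ is classical, by the lower-order part $L$, using the perturbation theorem for the holomorphic functional calculus of J.\ Pr\"uss and G.\ Simonett \cite{PS}. The rough shape of that theorem is: if $B$ has a bounded $H^\infty$-calculus of angle $<\pi/2$ on a UMD space $X$, and a perturbing operator $P$ (here $L$, or rather $L\LL^{-1}$ viewed appropriately) is such that the commutators of $P$ with the resolvents, or with the bounded operators $\varphi(B)$, are small in an appropriate sense, then $B+P$ (suitably shifted) again has a bounded $H^\infty$-calculus.

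First I would record the ingredients that are already available: $L^p(0,\tau,L^2(\Omega))$ is UMD (it is $L^p$ of a Hilbert space), $\partial$ with domain $_0W^{1,p}$ is sectorial of angle $0$ and, being the generator of the nilpotent translation semigroup, has a bounded $H^\infty$-calculus on $L^p(0,\tau,L^2(\Omega))$ (see \cite{DV}); the fibrewise operator $L$ has bounded imaginary powers, indeed for each fixed $t$ the operator $L(t)$ has a bounded $H^\infty$-calculus with uniform bounds coming only from the ellipticity constants, and $(\lambda+L)^{-1}$ acts fibrewise as $(\lambda+L(t))^{-1}$. By the maximal $L^p$-regularity from Theorem \ref{thm-HO}, $\LL=\partial+L$ is invertible, and $\partial\LL^{-1}$, $L\LL^{-1}$ are bounded on $L^p(0,\tau,L^2(\Omega))$ (the $L^p$ analogue of Lemma \ref{lem1}, via the apriori estimate \eqref{apriori-p}). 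So $\LL$ is a genuine perturbation of $\partial$ by the relatively bounded operator $L$.

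The heart of the matter is the commutator estimate that feeds the Pr\"uss--Simonett theorem. I would need to estimate, for $\lambda$ in a sector to the right of the imaginary axis (or after shifting by $\nu$), the commutator of $L$ with the resolvent of $\partial$, i.e.\ something like $[L,(\lambda+\partial)^{-1}]$, or equivalently the term $L(\lambda+\partial)^{-1} - (\lambda+\partial)^{-1}L$ acting on $D(L)$. Since $L$ acts pointwise in $t$ while $(\lambda+\partial)^{-1}$ mixes different times via the kernel $e^{-\lambda(t-s)}\mathbf 1_{\{s<t\}}$, this commutator measures the $t$-variation of $L(t)$, which is exactly controlled by the modulus of continuity $\omega$. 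The regularity hypothesis \eqref{om-p}, $\int_0^\tau \omega(t) t^{-1-\beta}\,dt<\infty$ for some $\beta>1/2$, is precisely what makes the resulting singular integral operator bounded on $L^p$ with an operator norm that decays like $|\lambda|^{-\varepsilon}$ as $|\lambda|\to\infty$ (after possibly shifting by a large $\nu$ so that $\mathrm{Re}\,\lambda\ge 0$ on the sector); one exploits here that $\|(\lambda+L(s))^{-1}L(t)(\lambda+L(s))^{-1}\|$ and related quantities are controlled uniformly, together with $\|L(t)^{1/2}(\lambda+L(s))^{-1}L(s)^{1/2}\|\lesssim 1$ by the square root property \eqref{eq2}, reducing everything to estimating $\|(L(t)^{1/2}-L(s)^{1/2})\cdot\|$ or $\|L(t)^{1/2}(\lambda+L(t))^{-1/2}\cdot\|$-type kernels against $\omega(|t-s|)$. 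The upshot is a bound of the form, schematically,
\[
\Bigl\| \bigl[L,(\lambda+\partial)^{-1}\bigr] \Bigr\|_{{\mathcal B}(L^p(0,\tau,L^2(\Omega)))} \;\le\; \frac{C}{|\lambda|^{\beta-\frac12}\, |\lambda|^{\frac12}}
\]
on a suitable sector, which is the smallness needed.

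With the commutator estimate in hand, I would invoke the perturbation theorem of \cite{PS}: choosing $\nu\ge 0$ large enough that $\nu+\partial$ is sectorial of small angle with the smallness/commutator bounds holding on an appropriate sector, and using that $L$ is $\partial$-bounded with relative bound controlled via $L\LL^{-1}$ bounded, one concludes that $\nu+\LL = (\nu+\partial)+L$ has a bounded $H^\infty(\Sigma_\theta)$-calculus on $L^p(0,\tau,L^2(\Omega))$ for some $\theta<\pi$; shrinking to $\theta<\pi/2$ if needed uses the accretivity. In particular $(\nu+\LL)^{is}$ is bounded for all $s\in\RR$. The main obstacle is unquestionably the commutator estimate: organising the pointwise-in-$t$ versus convolution-in-$t$ interaction into a Calder\'on--Zygmund-type kernel on $(0,\tau)$ with values in ${\mathcal B}(L^2(\Omega))$, verifying the off-diagonal decay and cancellation from $\omega$ under exactly the exponent $\beta>1/2$, and tracking the decay in $|\lambda|$ so that the Pr\"uss--Simonett hypotheses are met; the rest of the argument is bookkeeping with sectorial operators and UMD functional calculus.
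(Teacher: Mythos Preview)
Your overall strategy---apply the Pr\"uss--Simonett perturbation theorem \cite{PS} and feed it a commutator estimate controlled by the modulus of continuity $\omega$ via \eqref{om-p}---is exactly the paper's. But several details are wrong in a way that matters. First, $\partial$ on $L^p(0,\tau,L^2(\Omega))$ is \emph{not} sectorial of angle $0$: the semigroup it generates is the nilpotent shift, which is not analytic, so the sectoriality/$H^\infty$ angle of $\partial$ is $\frac{\pi}{2}$ (any $\frac{\pi}{2}+\epsilon'$). Consequently you cannot cast $\partial$ as the ``base operator of small angle'' and $L$ as a relatively bounded perturbation in a Kalton--Weis style statement. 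The paper instead takes $A=L+\epsilon$ (invertible, angle $<\frac{\pi}{2}$) and $B=\partial$ ($\mathcal R$-sectorial, angle $\frac{\pi}{2}+\epsilon'$), so that the Pr\"uss--Simonett angle condition $\phi_A+\phi_B<\pi$ is met.

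Second, the commutator needed is the two-parameter Labbas--Terreni one,
\[
\bigl\| A(\lambda+A)^{-1}\bigl[A^{-1}(\mu+B)^{-1}-(\mu+B)^{-1}A^{-1}\bigr]\bigr\|
\le C\,|\lambda|^{\alpha-1}\,|\mu|^{-1-\beta},
\]
not the single-parameter $[L,(\lambda+\partial)^{-1}]$ you wrote. With $A=L$, $B=\partial$ this reduces, fibrewise in $t$, to estimating $L(t)(\lambda+L(t))^{-1}\bigl[L(t)^{-1}-L(s)^{-1}\bigr]$, and a short computation (the paper cites \cite{OS}) gives the bound $C\,|\lambda|^{-1/2}\omega(|t-s|)$; then the kernel $e^{-c|\mu|(t-s)}\omega(t-s)$ is bounded on $L^p(0,\tau)$ with norm $\lesssim |\mu|^{-1-\beta}$ precisely by \eqref{om-p}. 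So the Labbas--Terreni estimate holds with $\alpha=\tfrac12$ and $\beta$ as in \eqref{om-p}. Note in particular that the square root property \eqref{eq2} plays no role in this proposition---only \eqref{om-p} is used---so your plan to invoke \eqref{eq2} inside the commutator estimate is unnecessary and would not by itself produce the needed two-parameter decay.
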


The proof is based on the following perturbation theorem (see Corollary 3.2 in \cite{PS}). 
\begin{theorem}\label{thmps}
Let $A$ and $B$ be two operators having holomorphic functional calculi with angles $\phi_A$ and $\phi_B$ on a Banach space $X$. Suppose that $0 \in \rho(A)$, 
$B$ is ${\mathcal R}$-sectorial and  $\phi_A + \phi_B < \pi$. Suppose in addition that for some $0 \le \alpha < \beta < 1$ the Labbas-Terreni commutator estimate
\begin{equation}\label{ps}
\left\| A (\lambda + A)^{-1} \left[ A^{-1}(\mu + B)^{-1} - (\mu + B)^{-1}A^{-1} \right] \right\|_{{\mathcal B}(X)} \le C | \lambda|^{\alpha-1}
|\mu|^{-\beta-1}
\end{equation}
holds for all $\lambda$ and $\mu$ with $| \arg(\lambda) | < \pi - \phi_A$ and $| \arg(\mu) | < \pi- \phi_B$. Then there exists 
a $\nu \ge 0$ such that $\nu + A + B$ has a bounded holomorphic functional calculus on $X$.
\end{theorem}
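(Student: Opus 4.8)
The plan is to prove Proposition \ref{propPS} by applying the abstract perturbation result, Theorem \ref{thmps}, to the concrete splitting $\LL = L + \partial$ on the Banach space $X = L^p(0,\tau,L^2(\Omega))$. First I would take $A = c + L$ for a suitable constant $c > 0$ and $B = \partial$. The choice $A = c+L$ rather than $A=L$ guarantees $0 \in \rho(A)$; since $L$ is maximal accretive with bounded imaginary powers on $X$ (as noted just before the proposition, using the fibrewise formula $(L^{is}u)(t)=L(t)^{is}u(t)$ together with the uniform bound $\|L(t)^{is}\|_{{\mathcal B}(L^2(\Omega))}\le e^{\frac{\pi}{2}|s|}$), the operator $A=c+L$ has a bounded holomorphic functional calculus, and one can take its angle $\phi_A$ as close to $0$ as desired because the sectoriality angle is governed only by the ellipticity constants. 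Likewise $B=\partial$ has a bounded $H^\infty$-calculus on $X$ (cited from \cite{DV}), and on $L^p$ of a UMD-valued space it is ${\mathcal R}$-sectorial; its angle $\phi_B$ can be taken close to $\frac{\pi}{2}$. Hence the angle condition $\phi_A + \phi_B < \pi$ is satisfied.

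The main work, and the step I expect to be the principal obstacle, is verifying the Labbas--Terreni commutator estimate \eqref{ps} for this pair. I would first simplify the bracket: since $A=c+L$, we have $A^{-1}(\mu+\partial)^{-1} - (\mu+\partial)^{-1}A^{-1} = A^{-1}\big[(\mu+\partial)^{-1}, A\big]A^{-1}(\mu+\partial)^{-1}$ after commuting, so the commutator reduces to controlling $[\partial, L]$ in a suitable sense. Formally, because $(Lu)(t)=L(t)u(t)$ acts fibrewise and $\partial$ differentiates in $t$, the commutator $[\partial,L]u$ produces the time-derivative $\dot L(t)u(t)$ of the coefficients, and it is precisely here that the regularity hypothesis \eqref{om-p} enters: the Hölder-type modulus $\omega$ with $\int_0^\tau \omega(t)t^{-1-\beta}\,dt<\infty$ for some $\beta>\frac12$ furnishes the needed smoothness of $t\mapsto L(t)$ (measured as a map into ${\mathcal B}(V,V')$) to bound the commutator. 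The goal is to extract from this the decay $|\lambda|^{\alpha-1}|\mu|^{-\beta-1}$ with $0\le\alpha<\beta<1$: the factor $A(\lambda+A)^{-1}$ is uniformly bounded and the extra $A^{-1}$ gains the power $|\lambda|^{\alpha-1}$ via the moment inequality for the sectorial operator $A$, while the resolvents of $\partial$ together with the integrated modulus condition produce the $|\mu|^{-\beta-1}$ decay.

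Concretely, I would estimate the commutator by writing $(\mu+\partial)^{-1}$ through its kernel (the exponential semigroup $e^{-s\partial}$ integrated against $e^{-\mu s}$), so that the commutator with $L$ becomes an integral involving the difference $L(t)-L(t-s)$ acting on the relevant functions; the piecewise $C^\alpha$ / modulus-$\omega$ control of $t\mapsto\fra(t,\cdot,\cdot)$ then bounds $\|(L(t)-L(t-s))\|$ in the $V\to V'$ duality by $\omega(|s|)$, and combining $\|A^{-s}\|$-type bounds with $\int_0^\infty \omega(s)\,|\mu|\,e^{-\mathrm{Re}(\mu)s}\,\cdots\,ds$ yields the asserted power of $|\mu|$ after optimizing in the fractional exponents. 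The technical heart is choosing the fractional powers $\alpha<\beta$ compatibly with $\beta>\frac12$ coming from \eqref{om-p}, and justifying the fibrewise resolvent and semigroup manipulations rigorously on the vector-valued $L^p$ space; once \eqref{ps} is established, Theorem \ref{thmps} delivers a $\nu\ge0$ with $\nu + A + B = (\nu+c) + \LL$ possessing a bounded holomorphic functional calculus, and absorbing the constant $c$ into $\nu$ gives the statement. Bounded imaginary powers follow immediately, as the functions $z\mapsto z^{is}$ lie in the relevant $H^\infty$ class.
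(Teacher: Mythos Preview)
Your proposal does not address the stated theorem. Theorem~\ref{thmps} is the abstract Pr\"uss--Simonett perturbation result, which the paper does not prove at all but merely quotes (``see Corollary~3.2 in~\cite{PS}''). What you have written is instead a sketch of how to \emph{apply} Theorem~\ref{thmps} in order to obtain Proposition~\ref{propPS}; you take the theorem as a black box rather than establishing it. A proof of Theorem~\ref{thmps} itself would require the full machinery of~\cite{PS} (operator-valued $H^\infty$-calculus, ${\mathcal R}$-boundedness, and the Da~Prato--Grisvard/Labbas--Terreni framework for sums of non-commuting operators), none of which appears in your outline.

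If your intention was actually to prove Proposition~\ref{propPS}, then your plan is essentially the paper's: both take $A$ to be a small shift of $L$ (you write $c+L$, the paper $L+\epsilon$) and $B=\partial$, check the angle condition and ${\mathcal R}$-sectoriality, and then verify the commutator estimate~\eqref{ps}. The paper is considerably more concrete on that last step than you are. It uses the explicit kernel $(\mu+\partial)^{-1}f(t)=\int_0^t e^{-\mu(t-s)}f(s)\,ds$, invokes an estimate from~\cite{OS} to bound $\|L(t)(\lambda+L(t))^{-1}[L(t)^{-1}-L(s)^{-1}]f(s)\|_{L^2(\Omega)}$ by $C|\lambda|^{-1/2}\omega(|t-s|)\|f(s)\|_{L^2(\Omega)}$, and then controls the resulting integral operator on $L^p(0,\tau)$ by a Schur-type kernel bound using~\eqref{om-p}, arriving at~\eqref{ps} with $\alpha=\tfrac12$ and the $\beta$ from~\eqref{om-p}. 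Your description of this step --- ``$[\partial,L]u$ produces $\dot L(t)u(t)$'' and ``optimizing in the fractional exponents'' --- is only heuristic; in particular the coefficients need not be differentiable in $t$, so the argument has to be run through the finite-difference modulus $\omega$ and the resolvent kernel rather than through a literal commutator with $\dot L$.
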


\begin{proof} [Proof of Proposition \ref{propPS}] The operator $\LL + \epsilon $ is the sum of (non-commuting) operators $B = \partial$ and $ A = L + \epsilon$.  Each of these operators has a bounded holomorphic functional calculus on $L^p(0, \tau, L^2(\Omega))$ with angles $\phi_\partial = \frac{\pi}{2} + \epsilon'$ (for any $\epsilon' > 0$, see \cite{DV}) and $\phi_L < \frac{\pi}{2}$, respectively. Hence,  $\phi_\partial + \phi_L < \pi$. Next, the functional calculus is ${\mathcal R}-$bounded (for holomorphic functions with modulus $\le 1$). This follows from  \cite{Hy}, Theorem 10.3.4 (3)  in combination with  Proposition 7.5.3 (which shows that $L^p(0, \tau, L^2(\Omega)$ has Pisier's contraction principle since this is the case for the Hilbert space $L^2(\Omega)$). The role of $\epsilon > 0$ above is only to guarantee that $L+ \epsilon$ is invertible. For simplicity we forget $\epsilon$ and keep in mind that $L$ has to be replaced by $L + \epsilon$ in the sequel. We claim that \eqref{ps} is satisfied with $\alpha = \frac{1}{2}$ and $\beta$ as in \eqref{om-p}. Once this is proved we can apply Theorem \ref{thmps} to obtain the proposition. \\
Let $f \in L^p(0, \tau, L^2(\Omega))$ and set 
\[
I(t) := \left\| L(t) (\lambda + L(t))^{-1} \left[ L(t)^{-1}(\mu + \partial)^{-1} - (\mu + \partial)^{-1}L(t)^{-1}  f(t)\right] \right\|_{L^2(\Omega)}.
\]
Since 
\begin{equation}\label{res-truc}
(\mu + \partial)^{-1} f(t) = \int_0^t e^{-\mu (t-s)} f(s)\, ds
\end{equation} 
 and ${\rm Re} (\mu) \approx |\mu|$, we have
\begin{eqnarray*}
I(t) &=& \left\| \int_0^t e^{-\mu (t-s)} L(t) (\lambda + L(t))^{-1} \left[ L(t)^{-1} - L(s)^{-1} \right]  f(s)\, ds \right\|_{L^2(\Omega)}\\
&\le&  \int_0^t e^{- c |\mu|  (t-s)} \left\| L(t) (\lambda + L(t))^{-1} \left[ L(t)^{-1} - L(s)^{-1} \right] f(s) \right\|_{L^2(\Omega)}\, ds
\end{eqnarray*}
for some constant $c > 0$. Now we argue exactly as in \cite{OS}, p. 1675 to obtain
\[ \left\| L(t) (\lambda + L(t))^{-1} \left[ L(t)^{-1} - L(s)^{-1} \right] f(s) \right\|_{L^2(\Omega)} \le \frac{C}{| \lambda|^{1/2}} \omega(|t-s|) \| f(s) \|_{L^2(\Omega)}.
\]
This gives
\begin{equation}\label{tr}
 I(t) \le \frac{C}{| \lambda|^{1/2}} \int_0^t e^{- c |\mu|  (t-s)} \omega(|t-s|)  \| f(s) \|_{L^2(\Omega)} \, ds.
\end{equation}
The term $\int_0^t e^{- c |\mu|  (t-s)} \omega(|t-s|)  \| f(s) \|_{L^2(\Omega)} \, ds$  can be seen as an operator (acting on $\| f(s) \|_{L^2(\Omega)}$) with kernel 
$$K(t,s) = \chi_{(0,t)}(s) e^{- c |\mu|  (t-s)} \omega(t-s).$$
Using the assumption \eqref{om-p} we have for all $t \in (0,\tau)$
\begin{eqnarray*}
 \int_0^\tau K(t,s) \, ds &=& \frac{1}{|\mu|^{\beta+1}} \int_0^t e^{- c |\mu|  (t-s)} (| \mu |(t-s))^{\beta+1}\,  \frac{ \omega(t-s)}{(t-s)^{\beta+1}} \, ds\\
 &\le& \frac{C}{|\mu|^{\beta+1}} \int_0^\tau \frac{\omega(r)}{r^{1+\beta}} \, dr \le \frac{C'}{|\mu|^{\beta+1}}.
 \end{eqnarray*}
 Similarly, 
 \[
 \int_0^\tau K(t,s) \, dt \le   \frac{C'}{|\mu|^{\beta +1}},
 \]
 uniformly in $s \in (0, \tau)$. 
This implies that the operator with kernel $K(t,s)$ is bounded on $L^p(0,\tau)$ with norm bounded by $\frac{C'}{|\mu|^{\beta+1}}$. It follows from \eqref{tr} that the operator 
$L (\lambda + L)^{-1} \left[ L^{-1}(\mu + \partial)^{-1} - (\mu + \partial)^{-1}L^{-1} \right]$ is bounded on $L^p(0, \tau, L^2(\Omega))$ with norm bounded  by
$\frac{C}{|\lambda|^{1/2} |\mu|^{1+\beta}}$. This is exactly the condition \eqref{ps}.
\end{proof}

We go back to the proof of Theorem \ref{thm1-p}. Since $\partial$ has imaginary powers, we have  $D(\sqrt{\partial}) = [{}_0W^{1,p}, L^p(0, \tau, L^2(\Omega))]_{\frac{1}{2}}$ with equivalent norms. It follows from \cite{Amann}, Theorem 4.7.1 or \cite{Denk}, p. 41 that 
$[{}_0W^{1,p}, L^p(0, \tau, L^2(\Omega))]_{\frac{1}{2}}$ coincides with $W^{\frac{1}{2},p}(0, \tau, L^2(\Omega))$ if $p < 2$ and with
${}_0W^{\frac{1}{2},p}(0, \tau, L^2(\Omega))$ if $p > 2$.\footnote{This is stated in \cite{Amann} and \cite{Denk} on the interval $(0,\infty)$ instead of $(0, \tau)$. One either uses a similar retraction and coretraction argument used their to deal directly  with $(0,\tau)$ or use a cut-off argument around the point $\tau$. See also \cite{BEg} for interpolation results in the scalar case.}  Hence 
\begin{equation}\label{eq2-2}
\| \sqrt{\partial}\, u \|_{L^p(0,\tau, L^2(\Omega))} \approx \| u \|_{W^{\frac{1}{2},p}(0,\tau,L^2(\Omega))}.
\end{equation}
By Proposition \ref{propPS}, $(\LL + \nu)^{is}$ is bounded on $L^p(0,\tau, L^2(\Omega))$, thus  we can repeat  the proof of  Lemma \ref{lem2} and obtain
\[
 \| \sqrt{\partial}\, u \|_{L^p(0,\tau, L^2(\Omega))} + \| \sqrt{L}\, u \|_{L^p(0,\tau, L^2(\Omega))} \le C  \| \sqrt{\LL + \nu}\, u \|_{L^p(0,\tau, L^2(\Omega))}.
 \]
On the other hand since the operator $\LL$ is invertible by Theorem \ref{thm-HO}, we can remove the constant $\nu$ in the previous inequality and obtain
\[
 \| \sqrt{\partial}\, u \|_{L^p(0,\tau, L^2(\Omega))} + \| \sqrt{L}\, u \|_{L^p(0,\tau, L^2(\Omega))} \le C'  \| \sqrt{\LL }\, u \|_{L^p(0,\tau, L^2(\Omega))}.
 \]
Using the same estimate for the adjoint operator on $L^{p'}(0, \tau, L^2(\Omega))$ we argue by  duality as  in Lemma \ref{lem3} and obtain  
the reverse inequality. Therefore,
\begin{equation}\label{eq2-4}
 \| \sqrt{\LL}\, u \|_{L^p(0,\tau, L^2(\Omega))} \approx \| \sqrt{\partial}\, u \|_{L^p(0,\tau, L^2(\Omega))} + \| \sqrt{L}\, u \|_{L^p(0,\tau, L^2(\Omega))}
 \end{equation}
 for all $u \in D(\sqrt{\LL}) = D(\sqrt{\partial}) \cap D(\sqrt{L})$. 
Using \eqref{eq2-2} it follows that 
\begin{equation}\label{eq2-5}
 \| \sqrt{\LL}\, u \|_{L^p(0,\tau, L^2(\Omega))} \approx \| u \|_{W^{\frac{1}{2},p}(0,\tau,L^2(\Omega))} + \| \sqrt{L}\, u \|_{L^p(0,\tau, L^2(\Omega))}
 \end{equation}
 for all $u \in D(\sqrt{\LL}) = [{}_0W^{1,p}(0,\tau, L^2(\Omega)), L^p(0,\tau, L^2(\Omega))]_{\frac{1}{2}} \cap D(\sqrt{L})$. \\
Thus we have proved Theorem \ref{thm1-p}.\\

As we already mentioned before, the method we employed in this paper can be used in other circumstances. For example, the above $L^p(L^2)$-estimate can be proved for elliptic operators with lower order terms, some degenerate operators as well as parabolic systems. We do not write the details since they are essentially a simple  repetition of what is presented above. 

\section{$L^p(L^r)$-estimates}
In this section we address the question whether the previous results can be extended to $L^p(0,\tau, L^r(\Omega))$ for $r \not= 2$. When reproducing  the arguments of the previous sections we face two problems. The first is to have maximal $L^p$-regularity  in $L^r(\Omega)$ since Theorem \ref{thm-HO} is specific to the $L^2(\Omega)$ case. The second one is to have boundedness of imaginary powers of $\LL$ (or $\nu + \LL$ for some constant $\nu \ge 0$). The arguments in the proof of Proposition  \ref{propPS} use the sesquilinear form setting in order to check \eqref{ps}. Note that there are results on maximal regularity outside the Hilbert space (and hence the sesquilinear form) setting. However these results assume the  domains of $L(t)$ to be constant. See \cite{ACFP} and the references there. In order to guarantee that the operators $L(t)$ have the same domain on $L^r(\Omega)$ the natural thing to do is to compute this domain and show that it coincides with some Sobolev space. In order to do so one needs some regularity in the $x$-variable for $A(x,t)$  and also some regularity of $\Omega$. In order to stay with non-smooth coefficients in the $x$-variable we shall concentrate on the case $A(x,t) = A(x)$. We also assume that our elliptic operator is subject to the Dirichlet boundary conditions. With the same notation as before, we have
\begin{theorem}\label{thm1-p-r} Suppose \eqref{eq0} on $\Omega$, \eqref{eq2} and \eqref{om-p}. 
Suppose that $A(x,t) = A(x)$ has real-valued coefficients. Let $p \in (1, \infty)$ and denote by $p'$ its conjugate. Then for $r \in [\min(p,p'), \max(p,p')]$, 
\begin{equation}\label{eq3-1}
\| \sqrt{\partial}\, u \|_{L^p(0,\tau, L^r(\Omega))} + \| \sqrt{L}\, u \|_{L^p(0,\tau, L^r(\Omega))} \approx  \| \sqrt{\LL}\, u \|_{L^p(0,\tau, L^r(\Omega))}
 \end{equation}
for all $u \in D(\sqrt{\LL})$. In addition, for $r \in [\min(p,p'), 2]$, there exists a constant $C$ such 
\begin{equation}\label{eq3-2}
\| \sqrt{\partial}\, u \|_{L^p(0,\tau, L^r(\Omega))} + \| \nabla u \|_{L^p(0,\tau, L^r(\Omega))} \le C  \| \sqrt{\LL}\, u \|_{L^p(0,\tau, L^r(\Omega))}.
 \end{equation}
\end{theorem}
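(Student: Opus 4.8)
The decisive new feature is that, when $A(x,t)=A(x)$, the operator $L$ no longer depends on $t$, so that $\partial$ and $L$ \emph{commute} on $L^p(0,\tau,L^r(\Omega))$: $((\mu+\partial)^{-1}u)(t)=\int_0^t e^{-\mu(t-s)}u(s)\,ds$ while $(\lambda+L)^{-1}$ acts pointwise in $t$, so the two resolvents commute. This allows one to replace the Pr\"uss--Simonett / Labbas--Terreni commutator argument of Proposition \ref{propPS} by the classical theory of sums of commuting sectorial operators.

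First I would collect the functional-calculus facts in the space variable. Since $A$ is real-valued and the boundary condition is Dirichlet ($V=H_0^1(\Omega)$), $-L$ generates a positive, $L^\infty$-contractive (sub-Markovian) semigroup; it is consistent on the scale $L^r(\Omega)$, bounded analytic there for $1<r<\infty$, and $L$ has a bounded holomorphic functional calculus on $L^r(\Omega)$: of angle equal to the sectoriality angle $\omega<\tfrac{\pi}{2}$ of the form $\fra$ at $r=2$ (which is where \eqref{eq2} is used), and of some angle $\omega_r<\tfrac{\pi}{2}$ for $r$ in the range $[\min(p,p'),\max(p,p')]$, obtained by interpolating the $L^2$-calculus against the $L^1$/$L^\infty$ contractivity (for the full range one uses, in addition, the Gaussian heat-kernel bounds available for real coefficients and Dirichlet conditions). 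In the time variable, $L^p(0,\tau,L^r(\Omega))$ is a UMD space, so $\partial$ (with $u(0)=0$) has a bounded $H^\infty(\Sigma_{\pi/2+\epsilon})$-calculus for every $\epsilon>0$ and is $\mathcal{R}$-sectorial, while $L$, acting pointwise in $t$, keeps its $H^\infty$-calculus of angle $\omega_r$. As $\omega_r+\tfrac{\pi}{2}<\pi$, the Dore--Venni theorem (equivalently the Kalton--Weis sum theorem) applies and shows that $\LL=\partial+L$ is closed on $D(\partial)\cap D(L)$, sectorial, invertible (consistently with Theorem \ref{thm-HO}), and has bounded imaginary powers on $L^p(0,\tau,L^r(\Omega))$.

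Granted this, I would then transcribe Lemmas \ref{lem2} and \ref{lem3} almost verbatim. The three-lines estimate for $T(z)=\partial^z\LL^{-z}$ (and its analogue with $L$ in place of $\partial$) is bounded at $\mathrm{Re}\,z=0$ by the imaginary powers, and at $\mathrm{Re}\,z=1$ by the boundedness of $\partial\LL^{-1}$ and $L\LL^{-1}$, that is, by the maximal $L^p$-regularity of $u'+Lu=f$, $u(0)=0$, in $L^r(\Omega)$, which holds because $L$ is $\mathcal{R}$-sectorial on $L^r(\Omega)$. This yields $\|\sqrt{\partial}\,u\|+\|\sqrt{L}\,u\|\le C\,\|\sqrt{\LL}\,u\|$ in $L^p(0,\tau,L^r(\Omega))$, and the duality argument of Lemma \ref{lem3}, carried out on the conjugate space $L^{p'}(0,\tau,L^{r'}(\Omega))$ (whose exponents $p',r'$ again lie in the admissible range), gives the reverse inequality together with $D(\sqrt{\LL})=D(\sqrt{\partial})\cap D(\sqrt{L})$; this is \eqref{eq3-1}. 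Finally \eqref{eq3-2} follows by inserting into \eqref{eq3-1} the boundedness on $L^r(\Omega)$, for $1<r\le 2$, of the Riesz transform $\nabla L^{-1/2}$, classical for real-coefficient divergence-form operators with Dirichlet conditions and a consequence of the Gaussian bounds; the restriction $r\le 2$ is genuine, as this operator may be unbounded for $r>2$. The main obstacle is precisely the middle step: proving that $L$ has a bounded $H^\infty$-calculus \emph{of angle strictly below $\pi/2$} on $L^r(\Omega)$, not merely that it is sectorial there — this is where the reality of the coefficients (hence sub-Markovianity and Gaussian bounds) and the restriction of $r$ are used; once it is in place, the commuting-sum theorem and the reuse of Sections 2--3 are routine.
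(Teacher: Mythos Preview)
Your approach is correct but takes a genuinely different route from the paper's. The paper does \emph{not} exploit the commutativity of $\partial$ and $L$; instead it observes that $e^{-t\LL}$ is a positive contraction semigroup (via the Trotter product formula, since both $e^{-t\partial}$ and $e^{-tL}$ are positive contractions), and then, for the diagonal case $r=p$, identifies $L^p(0,\tau,L^p(\Omega))\simeq L^p(\Omega\times(0,\tau))$ and invokes the Coifman--Weiss transference principle to obtain a bounded $H^\infty$-calculus for $\LL$ directly on that space. It then \emph{interpolates} this with the $L^p(0,\tau,L^2(\Omega))$ result already proved in Proposition~\ref{propPS}, yielding the calculus on $L^p(0,\tau,L^r(\Omega))$ for $r$ between $2$ and $p$; the full range $[\min(p,p'),\max(p,p')]$ then comes by running the same argument for $\LL^*$. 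By contrast, you use the time-independence to place yourself in the commuting setting, establish an $H^\infty$-calculus of angle $<\tfrac{\pi}{2}$ for $L$ on $L^r(\Omega)$ via sub-Markovianity and Gaussian bounds, and then appeal to a Kalton--Weis type joint-calculus/sum theorem to get the calculus for $\LL$.

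Two remarks on what each approach buys. The paper's transference argument is agnostic about the angle of the $H^\infty$-calculus of $L$ on $L^r$: it never needs this angle to be below $\tfrac{\pi}{2}$, because it works with $\LL$ as a single positive contraction generator on a scalar $L^p$-space; the restriction on $r$ then arises naturally from the interpolation endpoints $r=2$ and $r=p$. Your approach, once the angle condition on $L$ is secured (which the Gaussian bounds do deliver for real coefficients with Dirichlet conditions), is more direct and in fact would give the result for \emph{all} $r\in(1,\infty)$, not merely $r\in[\min(p,p'),\max(p,p')]$; the restriction in your write-up is artificial. One small point of care: the classical Dore--Venni theorem yields only closedness of $\partial+L$; to conclude bounded imaginary powers (or $H^\infty$-calculus) of the sum you should cite explicitly a joint-functional-calculus result for commuting operators on spaces with property~$(\alpha)$ (e.g.\ Kalton--Weis, or Pr\"uss--Sohr for BIP), rather than Dore--Venni itself.
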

\begin{proof} Firstly,  since $L$ has real-coefficients and is subject to the Dirichlet boundary conditions, the semigroup $e^{-tL}$ is sub-Markovian (cf. \cite{Ouh}, Chapter 4). Therefore, by \cite{Lamberton}, $L$ has maximal $L^p$-regularity on $L^r(\Omega)$ for all $p, r \in (1, \infty)$. In particular, the operator $\LL = \partial + L$ defined on the intersection  ${}_0W^{1,2}(0, \tau, L^r(\Omega)) \cap D(L)$ is maximal accretive (note that both $\partial$ and $L$ are accretive on $L^p(0,\tau, L^r(\Omega))$). On the other hand, the two maximal accretive operators $\partial$ and $L$ are generators of positive semigroups. For positivity of $e^{-tL}$ see \cite{Ouh}, Chapter 4 and for $e^{-t\partial}$ this follows readily from the positivity of its resolvent (see \eqref{res-truc}). This and the Trotter product formula give the positivity of the contraction semigroup $e^{-t \LL}$ on  $L^p(0,\tau, L^r(\Omega))$. Since for  $r = p$,
$L^p(0,\tau, L^p(\Omega)) \simeq L^p(\Omega\times (0, \tau))$ we may use the transference method \cite{coif-weis} to obtain that 
$\LL$ has a bounded holomorphic functional calculus on $L^p(0,\tau, L^p(\Omega))$ (with angle $\phi > \frac{\pi}{2}$). This is also true for $\nu + \LL$ for any $\nu \ge 0$. Using this and Proposition \ref{propPS} it follows by interpolation that $\nu + \LL$ has a bounded holomorphic functional calculus on $L^p(0, \tau, L^r(\Omega))$ for 
$r \in [p,2]$ or $[2,p]$. What we did here for $\LL$ is also valid for $\LL^* = \partial^* + L^*$ by the same arguments. This gives that $\nu + \LL$ has a bounded holomorphic functional calculus on $L^p(0, \tau, L^r(\Omega))$ for all $p \in (1, \infty)$ and $r \in [\min(p,p'), \max(p,p')]$. In particular, the imaginary powers $(\nu + \LL)^{is}$ are bounded on these spaces. The rest of the proof of \eqref{eq3-1} is exactly the same as for Theorem \ref{thm1-p}.

Suppose now that $r \in [\min(p,p'), 2]$. Then the Riesz transform $\nabla L^{-\frac{1}{2}}$ is bounded on $L^r(\Omega)$ (see \cite{Ouh}, Section 7.7). This gives $\| \nabla f \|_{L^r(\Omega)} \le C \| \sqrt{L}\, f \|_{L^r(\Omega)}$. Thus, \eqref{eq3-2} follows from \eqref{eq3-1}.
\end{proof}

\begin{remark}\label{rem2}
The idea of using the  transference method on $L^p(0, \tau, L^p(\Omega))$ was already  used in \cite{OS2} in the context of parabolic Schr\"odinger operators.
\end{remark}

\end{document}